\theoremstyle{theorem}
\newtheorem{theorem}{\bf Theorem}[section]
\newtheorem{lemma}[theorem]{\bf Lemma}
\newtheorem{corollary}[theorem]{\bf Corollary}
\theoremstyle{definition}
\newtheorem{definition}[theorem]{\bf Definition}
\theoremstyle{remark}
\newtheorem{remark}[theorem]{\bf Remark}
\renewcommand{\d}{\text{\rm d}}
\newcommand{\vep}{\varepsilon}
\newcommand{\e}{{\rm e}}
\newcommand{\R}{\mathbb{R}}
\newcommand{\N}{\mathbb{N}}
\newcommand{\Z}{\mathbb{Z}}
\newcommand{\vierint}{\int_{0}^1\int_0^T\int_{\square}\int_{\Omega}}
\newcommand{\wtts}{\overset{2,2}{\rightharpoonup}}
\newcommand{\dv}{{\rm{div}}}
\newcommand{\T}{\mathbb{T}}
\begin{document}

\title[Space-time homogenization for porous medium equations]{Space-time homogenization problems \\ for porous medium equations \\ with nonnegative initial data}

\author{Goro Akagi}
\author{Tomoyuki Oka}

\thanks{G.A.~is supported by JSPS KAKENHI Grant Number JP21KK0044, JP21K18581, JP20H01812, JP18K18715 and JP20H00117, JP17H01095. T.O.~is supported by Division for Interdisciplinary Advanced Research and Education, Tohoku University and Grant-in-Aid for JSPS Fellows (No.~JP20J10143).}
\address[Goro Akagi]{Mathematical Institute and Graduate School of Science, Tohoku University, Aoba, Sendai 980-8578, Japan}
\email{goro.akagi@tohoku.ac.jp}
\address[Tomoyuki Oka]{Graduate School of Science, Tohoku University, Aoba, Sendai 980-8578, Japan}
\email{tomoyuki.oka.q3@tohoku.ac.jp}
\date{\today}
\keywords{Periodic space-time homogenization, two-scale convergence, porous medium equation, integrability of gradients}
\subjclass[2020]{\emph{Primary}: 35B27; \emph{Secondary}: 80M40, 47J35} 
%
\maketitle
\begin{abstract}
This paper concerns a space-time homogenization limit of nonnegative weak solutions to porous medium equations. In particular, the so-called homogenized matrix will be characterized in terms of solutions to cell problems, which drastically vary in a scaling parameter $r > 0$. A similar problem has already been studied in~\cite{AO1}, where the growth of the power nonlinearity is strictly restricted due to some substantial obstacles. In the present paper, such obstacles will be overcome by developing local uniform estimates for the gradients of nonnegative weak solutions.
\end{abstract}

\section{Introduction}

In this paper, we shall consider a homogenization limit as $\vep \to 0_+$ of nonnegative weak solutions $u_\vep = u_\vep(x,t)$ for the following Cauchy-Dirichlet problem:
\begin{alignat}{4}
\partial_t u_\vep &= \mathrm{div} \Big( a(\tfrac x \vep, \tfrac t {\vep^r}) \nabla u_\vep^m \Big) \quad && \mbox{ in } \Omega \times (0, T),\label{eq:1.1}\\
 u_{\vep} &= 0 && \mbox{ on } \partial \Omega \times (0, T), \label{eq:1.2}\\
 u_{\vep} &= u_0 \geq 0 && \mbox{ in } \Omega \times \{0\},\label{eq:1.3}
\end{alignat}
where $\partial_t = \partial/\partial t$, $\Omega$ is a bounded domain of $\R^N$ with smooth boundary $\partial \Omega$, $T > 0$ and
\begin{equation}\label{H1}
1 < m < +\infty, \quad u_0 \in L^{m+1}(\Omega), \quad u_0 \geq 0.
\end{equation}
Moreover, $a = a(y,s)$ is a $1$-periodically oscillating symmetric coefficient satisfying uniform ellipticity, that is, $a = a(y,s): \R^N \times \R_+ \to \R^{N \times N}$ fulfills
\begin{align}\label{H2}
 {}^ta(y,s) = a(y,s), \quad a(y+k e_j,s+\ell)= a(y,s),
\end{align}
where $e_j$ stands for the $j$-th vector of the canonical basis of $\R^N$, for $j=1,2,\ldots,N$ and $k, \ell \in \Z$, and there exists a positive constant $\lambda$ such that
\begin{align}\label{H3}
 \lambda |\xi|^2 \leq a(y,s) \xi \cdot \xi \leq |\xi|^2 \quad \mbox{ for } \ \xi \in \R^N.
\end{align}
In addition, we assume that
\begin{align}\label{H4}
a\in W^{1,1}(\R_+;L^{\infty}(\R^N)),
\end{align}
under which existence and uniqueness of weak solutions for \eqref{eq:1.1}--\eqref{eq:1.3} have already been proved in~\cite[Definition 1.1 and Theorem 1.2]{AO1}. Moreover, the following fundamental result is obtained in~\cite[Theorem 1.3]{AO1}, which concerns the homogenization for nonlinear diffusion under more general settings including fast diffusion equations and sign-changing data.

\begin{theorem}[\cite{AO1}]\label{T:AO1}
Let $1<m<+\infty$ and $0< r<+\infty$. Suppose that
\eqref{H1}--\eqref{H4} are satisfied.
Let $(\vep_n)$ be a sequence in $(0,1)$ such that $\vep_n \to 0_+$ and let $u_{\vep_n}$ be the unique weak solution on $[0,T]$ to \eqref{eq:1.1}--\eqref{eq:1.3} with $\vep = \vep_n$. Then there exist a {\rm (}not relabeled{\rm )} subsequence of $(\vep_n)$ and functions
\begin{align*}
u&\in W^{1,2}(0,T;H^{-1}(\Omega)) \cap C_{\rm weak}([0,T];L^{m+1}(\Omega)),\\
z&\in L^{2}(\Omega\times (0,T) ;L^{2}(0,1;H^{1}_{\mathrm{per}}(\square)/\R)),
\end{align*}
where $\square$ denotes the unit open cube in $\R^N$ as a domain for the $y$ variable {\rm (}see Notation below{\rm )}, such that
\begin{alignat}{4}
u^m &\in L^2(0,T;H^1_0(\Omega)),\label{um:L2H10}\\
u_{\vep_n}^m &\to u^m \quad \text{ weakly in }\ L^2(0,T;H^1_0(\Omega)), \label{eq:Thm1.1-1}\\
u_{\vep_n} &\to u \quad \text{ strongly in }\ L^\rho(0,T;L^{m+1}(\Omega))\label{eq:Thm1.1-2}
\end{alignat}
for any $\rho \in [1,+\infty)$ and
\begin{align}
&a(\tfrac{x}{\vep_n},\tfrac{t}{\vep_n^r})\nabla u_{\vep_n}^m \label{eq:Thm1.1-3}\\
 &\quad\wtts a(y,s) \left(\nabla u^m +\nabla_y z\right)\ \mbox{ in } \  [L^2(\Omega \times  (0,T)  \times \square \times (0,1))]^N,\nonumber
\end{align}
where $\nabla_y$ stands for the gradient in $y$ and $\wtts$ denotes the notion of weak two-scale convergence and it will be recalled in \S \ref{Ss:wtts} below. Moreover, the limit $u$ solves the weak form of the homogenized equation, for a.e.~$t \in (0,T)$,
\begin{equation}\label{eq:homeq}
\left\{
\begin{aligned}
&\langle \partial_tu(t),\phi\rangle_{H^1_0(\Omega)}+\int_{\Omega} j_{\rm hom}(x,t)\cdot\nabla\phi(x)\, \d x  =
0
\ \mbox{ for } \, \phi\in H^1_0(\Omega),\\
&u(\cdot,0)=u_0 \ \mbox{ in } \Omega
\end{aligned}
 \right.
\end{equation}
with a homogenized diffusion flux $j_{\rm hom}\in[L^2(\Omega\times (0,T))]^{N}$ given by
\begin{equation}\label{eq:jhom}
j_{\rm hom}(x,t)=\int_0^1\int_{\square}a(y,s)\left[\nabla u^m(x,t)+\nabla_yz(x,t,y,s)\right]\, \d y\,\d s
\end{equation}
for a.e.~$t \in (0,T)$.
\end{theorem}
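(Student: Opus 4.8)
The plan is to use the energy method together with the space-time two-scale convergence $\wtts$ recalled in \S\ref{Ss:wtts}. First I would record the a priori bounds behind the well-posedness of \cite{AO1}: testing \eqref{eq:1.1} with $u_\vep^m$ and using \eqref{H3} yields, uniformly in $\vep$,
\[
\|u_\vep\|_{L^\infty(0,T;L^{m+1}(\Omega))}+\|u_\vep^m\|_{L^2(0,T;H^1_0(\Omega))}+\|\partial_t u_\vep\|_{L^2(0,T;H^{-1}(\Omega))}+\|a(\tfrac x\vep,\tfrac t{\vep^r})\nabla u_\vep^m\|_{L^2(\Omega\times(0,T))}\le C.
\]
Hence, along a subsequence, $u_\vep^m\rightharpoonup\chi$ weakly in $L^2(0,T;H^1_0(\Omega))$ and $\partial_t u_\vep\rightharpoonup\partial_t u$ weakly in $L^2(0,T;H^{-1}(\Omega))$; since $L^{m+1}(\Omega)\hookrightarrow\hookrightarrow H^{-1}(\Omega)$, an Aubin--Lions--Simon argument gives $u_\vep\to u$ in $C([0,T];H^{-1}(\Omega))$, so $\int_0^T\!\!\int_\Omega u_\vep^m u_\vep=\int_0^T\langle u_\vep^m,u_\vep\rangle_{H^1_0,H^{-1}}\to\int_0^T\!\!\int_\Omega\chi u$. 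The elementary monotonicity of $s\mapsto s^m$ together with Minty's trick then forces $\chi=u^m$, and the inequality $c\,|u_\vep-u|^{m+1}\le(u_\vep^m-u^m)(u_\vep-u)$ yields $u_\vep\to u$ strongly in $L^{m+1}(\Omega\times(0,T))$, which upgrades to \eqref{eq:Thm1.1-2} upon interpolating with the $L^\infty(L^{m+1})$ bound; in particular \eqref{um:L2H10} and \eqref{eq:Thm1.1-1} hold.

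Next comes the two-scale analysis. Using the $L^2$ bounds above and the compactness theory for $\wtts$, I would pass to a further subsequence along which $u_\vep^m$ two-scale converges to $u^m(x,t)$, along which $\nabla u_\vep^m\wtts\nabla u^m(x,t)+\nabla_y z(x,t,y,s)$ for some $z\in L^2(\Omega\times(0,T);L^2(0,1;H^1_{\mathrm{per}}(\square)/\R))$ — the standard description of two-scale limits of gradients of sequences bounded in $L^2(0,T;H^1_0(\Omega))$, now in the space-time form — and along which $a(\tfrac x\vep,\tfrac t{\vep^r})\nabla u_\vep^m$ two-scale converges to some $\sigma_0\in[L^2(\Omega\times(0,T)\times\square\times(0,1))]^N$. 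The decisive point is the identification $\sigma_0=a(y,s)(\nabla u^m+\nabla_y z)$: by \eqref{H4} the map $s\mapsto a(\cdot,s)$ is continuous into $L^\infty(\R^N)$ and $1$-periodic, so $a(\tfrac x\vep,\tfrac t{\vep^r})$ is an admissible oscillating factor, i.e.\ it two-scale converges \emph{strongly} to $a(y,s)$; and the product of a strongly two-scale convergent sequence with a weakly two-scale convergent one converges weakly two-scale to the product of the limits. This gives \eqref{eq:Thm1.1-3}.

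Finally I would pass to the limit in the weak formulation of \eqref{eq:1.1}--\eqref{eq:1.3}: for $\phi\in L^2(0,T;H^1_0(\Omega))$,
\[
\int_0^T\langle\partial_t u_\vep,\phi\rangle\,\d t+\int_0^T\!\!\int_\Omega a(\tfrac x\vep,\tfrac t{\vep^r})\nabla u_\vep^m\cdot\nabla\phi\,\d x\,\d t=0 .
\]
The first term tends to $\int_0^T\langle\partial_t u,\phi\rangle\,\d t$; for the second, since the weak $L^2$-limit of a two-scale convergent sequence is the cell average of its two-scale limit, $a(\tfrac x\vep,\tfrac t{\vep^r})\nabla u_\vep^m\rightharpoonup\int_0^1\!\int_\square a(y,s)(\nabla u^m+\nabla_y z)\,\d y\,\d s=j_{\rm hom}$ weakly in $[L^2(\Omega\times(0,T))]^N$, which is \eqref{eq:jhom}. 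The initial condition $u(\cdot,0)=u_0$ follows from $u_\vep(\cdot,0)=u_0$, the convergence $u_\vep\to u$ in $C([0,T];H^{-1}(\Omega))$, and $u\in W^{1,2}(0,T;H^{-1}(\Omega))\cap C_{\rm weak}([0,T];L^{m+1}(\Omega))$; altogether this establishes \eqref{eq:homeq}--\eqref{eq:jhom}. (The cell equation solved by $z$ can additionally be recovered by testing with oscillating functions $\vep\,\psi(x,t)\,b(\tfrac x\vep,\tfrac t{\vep^r})$, but it is not needed for the conclusions stated here.)

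The main obstacle is the middle pair of steps. On one hand, the compactness for the nonlinearity is delicate because it is $u_\vep^m$, not $u_\vep$, that is controlled in $L^2(0,T;H^1_0(\Omega))$, so strong convergence of $u_\vep$ must be extracted through the monotone map $s\mapsto s^m$ rather than by a direct Aubin--Lions argument on $u_\vep$. On the other hand, and more seriously, the identification of the two-scale limit of the oscillating flux must be carried out under only the weak regularity \eqref{H4} and within the space-time two-scale framework, whose scaling in $r$ makes the very notion of (strong) two-scale convergence of the coefficient, and its stability under multiplication, a point requiring care.
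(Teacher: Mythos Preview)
This theorem is not proved in the present paper: it is quoted verbatim from \cite[Theorem~1.3]{AO1} and used as a black box (the current paper's contribution begins with Theorem~\ref{T:main}). So there is no in-paper proof to compare against.

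That said, your sketch is the standard route and almost certainly matches what \cite{AO1} does: energy estimates (exactly those recorded here as Lemma~\ref{L:bdd}), Aubin--Lions compactness in $C([0,T];H^{-1}(\Omega))$, identification $\chi=u^m$ via the monotonicity of $s\mapsto s^m$ and the pairing argument you wrote, strong $L^{m+1}$ convergence from the inequality $(a^m-b^m)(a-b)\ge c|a-b|^{m+1}$, then space-time two-scale compactness of $\nabla u_\vep^m$ (Theorem~\ref{T:wttscpt_grad}) and passage to the limit in the weak formulation. Your self-identified delicate point --- that under \eqref{H4} the coefficient $a(\tfrac{x}{\vep},\tfrac{t}{\vep^r})$ is an admissible oscillating factor so that products with weak two-scale limits behave well --- is exactly right; \eqref{H4} gives $a\in C([0,1];L^\infty_{\rm per}(\square))$, which is what one needs for admissibility in the space-time framework of \cite{Ho}. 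No gap is apparent in the outline.
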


We further refer the reader to~\cite{Ji,NaRa}, where similar results are obtained (see also~\cite{Vis07}). As for degenerate $p$-Laplace parabolic equations, homogenization problems involving the scale parameter $r$ are discussed in \cite{EfPa, Wo}. The main result of the present paper reads,
\begin{theorem}\label{T:main}
Let $1<m<+\infty$ and $0< r<+\infty$. In addition to \eqref{H1}--\eqref{H4}, suppose that
\begin{equation}\label{hyp-u0}
 \log u_0 \in L^1_{\rm loc}(\Omega) \ \mbox{ if } \ m = 3\/{\rm ;} \quad u_0^{3-m} \in L^1_{\rm loc}(\Omega) \ \mbox{ if } \ m > 3.
\end{equation}
Let $u$ be a {\rm (}homogenized{\rm )} limit of the unique weak solutions $(u_{\vep_n})$ to \eqref{eq:1.1}--\eqref{eq:1.3} for a sequence $\vep_n \to 0_+$ such that \eqref{um:L2H10}--\eqref{eq:Thm1.1-3} are fulfilled, and hence, $u$ is a weak solution of the homogenized equation \eqref{eq:homeq}. Then the homogenized flux $j_{\rm hom}(x,t)$ defined by \eqref{eq:jhom} is rewritten as 
$$
j_{\rm hom} = a_{\rm hom} \nabla u
$$
for some $N \times N$ matrix $a_{\rm hom}$, which is constant if $r \neq 2$ but may depend on $(x,t)$ if $r = 2$. Moreover, $a_{\rm hom}$ can be characterized as follows\/{\rm :}

In case $0<r<2$, $a_{\rm hom}$ is a constant $N\times N$ matrix given by
\begin{align}\label{eq:ahom}
a_{\rm hom}e_k=\int_0^1\int_{\square}a(y,s) \left[\nabla_y\Phi_k+e_{k}\right]\, \d y\,\d s,
\end{align}
where $e_k \in \R^N$ denotes the vector with a $1$ in the $k$-th coordinate and $0$'s elsewhere and $\Phi_k\in L^2(0,1;H^1_{\mathrm{per}}(\square)/\R)$ is the unique weak solution to the cell-problem\/{\rm :}
\begin{equation}\label{eq:CP1}
-\dv_y \left(a(y,s) \left[ \nabla_y\Phi_k+e_{k}\right]\right)=0\ \text{ in }\ \T^N \times \T,
\end{equation}
for $k = 1,2,\ldots,N$. Here $\T^N$ and $\T$ stand for the $N$- and one-dimensional tori, respectively. Furthermore, the pair $(u,z)$ satisfying \eqref{um:L2H10}--\eqref{eq:jhom} is uniquely determined. Hence $(u_{\vep_n})$ converges to $u$ {\rm(}without taking any subsequence{\rm )}. Moreover, the function $z=z(x,t,y,s)$ can be written as
\begin{equation}\label{eq:z}
z(x,t,y,s) = \sum_{k=1}^N \left[ \partial_{x_k} u^m(x,t) \right] \Phi_k(y,s).
\end{equation}

In case $r=2$, $a_{\rm hom} = a_{\rm hom}(x,t)$ may depend on $(x,t)$ and is characterized by \eqref{eq:ahom} with $\Phi_k$ given by
\begin{align}\label{eq:ahom2}
\Phi_k(x,t,y,s)
= \begin{cases}
m u^{m-1}(x,t) \Psi_k(x,t,y,s) &\mbox{if } \ u(x,t) \neq 0,\\
0 &\mbox{if } \ u(x,t) = 0,
   \end{cases}
\end{align}
where $\Psi_k \in L^\infty(\Omega \times (0,T) ; L^2(0,1;H^1_{\rm per}(\square)/\R))$ is the unique weak solution to the cell problem,
\begin{equation}\label{eq:CP2}
\begin{cases}
\partial_s\Psi_k=\dv_y\left(a(y,s)\left[ mu^{m-1}(x,t)\nabla_y\Psi_k+e_{k}\right]\right) &\mbox{in } \, \T^N \times \T,\\
\Psi_k|_{s=0} = \Psi_k|_{s=1} &\mbox{in } \, \T^N
\end{cases}
\end{equation}
for each $(x,t) \in  [u\neq 0]:= \{(x,t) \in \Omega\times (0,T)\colon u(x,t) \neq 0\}$. Furthermore, $z$ is given as in \eqref{eq:z}, where $\Phi_k$ may depend on $(x,t)$ as well as $(y,s)$.

In case $2<r<+\infty$, $a_{\rm hom}$ is a constant $N\times N$ matrix given by
\begin{equation*}
a_{\rm hom}e_k=\int_{\square} \Big(\int_0^1a(y,s)\ \d s\Big) \left[\nabla_y\Phi_k+e_{k}\right]\, \d y,
\end{equation*}
where $\Phi_k\in H^1_{\mathrm{per}}(\square)/\R$ is the unique weak solution to the cell problem,
\begin{equation}\label{eq:CP3}
-\dv_y \left( \Big(\int_0^1a(y,s)\ \d s \Big) \left[\nabla_y\Phi_k+e_{k}\right] \right)=0\ \text{ in }\ \T^N.
\end{equation}
Furthermore, the pair $(u,z)$ satisfying \eqref{um:L2H10}--\eqref{eq:jhom} is uniquely determined. Hence $(u_{\vep_n})$ converges to $u$ {\rm(}without taking any subsequence{\rm )}. Finally, $z$ is independent of $s$ and given by \eqref{eq:z} with $\Phi_k = \Phi_k(y)$.
\end{theorem}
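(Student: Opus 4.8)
The proof of Theorem~\ref{T:main} proceeds by inserting carefully chosen two-scale test functions into the weak form of \eqref{eq:1.1} and passing to the limit, separately in each of the three regimes $0<r<2$, $r=2$ and $2<r<+\infty$. The first task is to derive the \emph{cell equation} satisfied by $z$: starting from \eqref{eq:Thm1.1-3} and the weak formulation of \eqref{eq:1.1}--\eqref{eq:1.3} with oscillating test functions of the form $\phi(x,t)\psi(x/\vep_n,t/\vep_n^r)$, one obtains, after using the two-scale limit and integrating by parts in $y$ (and, when $r=2$, in $s$), a relation of the form $\int_0^1\int_\square a(y,s)\big[\nabla u^m+\nabla_y z\big]\cdot\nabla_y\psi\,\d y\,\d s = 0$ (for $r<2$), respectively $\int_0^1\int_\square\big(a[\nabla u^m+\nabla_y z]\cdot\nabla_y\psi - z\,\partial_s\psi\big)=0$ (for $r=2$) and the $s$-averaged version with $z=z(x,t,y)$ (for $r>2$). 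The extra time-scaling terms that appear in the weak form carry the factor $\vep_n^{2-r}$ or $\vep_n^{r-2}$, which is exactly what forces the trichotomy: the $\partial_s$ term survives only when $r=2$, dominates when $r<2$ forcing $z$ to be $s$-independent in the appropriate sense, and is negligible when $r>2$.

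\textbf{Key steps.} Once the cell equation is in hand, the argument is algebraic. For $0<r<2$: by linearity of \eqref{eq:CP1} in the data, the solution $z(x,t,\cdot)$ with source $\nabla u^m(x,t)$ equals $\sum_k \partial_{x_k}u^m(x,t)\,\Phi_k$, which is \eqref{eq:z}; substituting into \eqref{eq:jhom} and moving the constants $\partial_{x_k}u^m$ out of the $(y,s)$-integral gives $j_{\rm hom}=\sum_k(\partial_{x_k}u^m)\int_0^1\int_\square a[\nabla_y\Phi_k+e_k] = a_{\rm hom}\nabla u^m$; finally $\nabla u^m = m u^{m-1}\nabla u$ is absorbed only if one instead reads $j_{\rm hom}=a_{\rm hom}\nabla u$ with $a_{\rm hom}$ as in \eqref{eq:ahom} — here one must be careful that $\nabla u^m$ and $m u^{m-1}\nabla u$ agree a.e., which is where the gradient integrability improvement of the paper (the local uniform estimates advertised in the abstract) is used to make sense of $\nabla u$ on $[u=0]$. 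For $2<r<+\infty$ the reasoning is identical after replacing $a(y,s)$ by its $s$-average $\bar a(y):=\int_0^1 a(y,s)\,\d s$ and \eqref{eq:CP1} by \eqref{eq:CP3}. The case $r=2$ is the delicate one: the cell equation is the \emph{evolutionary} problem \eqref{eq:CP2}, which must be solved with periodic-in-$s$ boundary condition; one introduces $\Psi_k$ so that $\Phi_k = m u^{m-1}\Psi_k$ on $[u\neq0]$ (this is exactly the substitution \eqref{eq:ahom2}, chosen so that the $mu^{m-1}$ coefficient in front of $\nabla_y\Psi_k$ in \eqref{eq:CP2} matches the decomposition $\nabla u^m + \nabla_y z$ written in terms of $\nabla u$), then checks the factor $mu^{m-1}(x,t)$ may be pulled through, leaving a cell problem with $(x,t)$ entering only as a frozen parameter. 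On $[u=0]$ one shows $z$ and the contribution to $j_{\rm hom}$ vanish, so $a_{\rm hom}(x,t)$ is defined a.e.

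\textbf{Uniqueness and full convergence.} Having identified $a_{\rm hom}$, the homogenized equation \eqref{eq:homeq} becomes $\partial_t u = \dv(a_{\rm hom}\nabla u)$ — a (possibly $(x,t)$-dependent, but uniformly elliptic by \eqref{H3} and the standard energy bound on cell solutions) linear-looking equation for $u^m$, i.e. a nondegenerate-in-the-right-variables porous medium type equation; its weak solution with datum $u_0$ is unique by a monotonicity/Gronwall argument in $H^{-1}(\Omega)$ (testing the difference of two solutions with the difference of the $u^m$'s, as in the existence theory of~\cite{AO1}). Uniqueness of $u$ then forces uniqueness of $\nabla u^m$, hence of $z$ via the uniquely solvable cell problems, giving uniqueness of the pair $(u,z)$; and since every subsequence of $(\vep_n)$ admits a further subsequence converging to this same limit, the whole sequence converges.

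\textbf{Main obstacle.} The hard part is the case $r=2$: one must set up well-posedness for the time-periodic cell problem \eqref{eq:CP2} with the degenerate coefficient $mu^{m-1}(x,t)$ (which vanishes where $u=0$), prove the measurability and the $L^\infty$-in-$(x,t)$ bound on $\Psi_k$ claimed in the statement, and justify the passage to the two-scale limit of the time-derivative term — precisely the term whose handling in~\cite{AO1} required restricting the range of $m$. This is where the new local uniform gradient estimates for nonnegative weak solutions, together with the structural hypothesis \eqref{hyp-u0} on $u_0$ (guaranteeing $\log u_0$ or $u_0^{3-m}$ is locally integrable, which propagates to a local bound on $u^{m-1}$ or its inverse and thus controls the degenerate/singular coefficient), enter decisively; verifying that \eqref{hyp-u0} indeed yields the required control on the set $[u=0]$ and on the coefficient $mu^{m-1}$ is the technical heart of the argument.
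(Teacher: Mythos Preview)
Your plan has the right overall shape, but there is a genuine gap at the heart of the $r\geq 2$ cases, and it is precisely the gap that the paper's new ingredient is designed to close.

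The cell equation you write for $r=2$,
\[
\int_0^1\int_{\square}\Big(a(y,s)[\nabla u^m+\nabla_y z]\cdot\nabla_y\psi - z\,\partial_s\psi\Big)\,\d y\,\d s=0,
\]
is incorrect: the term multiplying $\partial_s\psi$ is \emph{not} $z$. The reason is that the time derivative in \eqref{eq:1.1} acts on $u_\vep$, not on $u_\vep^m$. When you test with $\vep_n\phi(x)b(x/\vep_n)\psi(t)c(t/\vep_n^r)$ and integrate by parts in $t$, the oscillatory contribution is
\[
\vep_n^{2-r}\int \frac{u_{\vep_n}-u}{\vep_n}\,\phi\,b\big(\tfrac{x}{\vep_n}\big)\,\psi\,\partial_s c\big(\tfrac{t}{\vep_n^r}\big)\,\d x\,\d t,
\]
and its (very weak two-scale) limit is $\int w\,\phi\,b\,\psi\,\partial_s c$, where $w$ is the corrector associated with $u_\vep$ itself, \emph{not} the corrector $z$ associated with $u_\vep^m$. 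In the linear case $m=1$ one has $w=z$ and your equation would be fine; for $m>1$ one needs the separate relation $z=mu^{m-1}w$, which is the content of the paper's Lemma~\ref{L:zw}. The actual limit relation for $r=2$ is $\partial_s w=\dv_y(a[\nabla u^m+\nabla_y z])$, and only after substituting $z=mu^{m-1}w$ does one obtain a closed equation for $w$ (equivalently for $\Psi_k$). Your substitution $\Phi_k=mu^{m-1}\Psi_k$ is the right formula but you have not explained why it arises; without $w$ and the identity $z=mu^{m-1}w$ there is no mechanism producing it.

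This is also where you have misplaced the role of the local gradient estimates and of hypothesis \eqref{hyp-u0}. They are not used ``to make sense of $\nabla u$ on $[u=0]$''. They are used to obtain a uniform bound on $\nabla u_{\vep_n}$ in $L^2_{\rm loc}(\Omega\times[0,T])$, which (i) gives local weak two-scale compactness of $\nabla u_{\vep_n}$ and hence defines $w$, and (ii) yields the very weak two-scale convergence of $(u_{\vep_n}-u)/\vep_n$ to $w$ (Corollary~\ref{C:vw}); both are needed even to \emph{state} the correct cell equation. The strong $L^2$ convergence $u_{\vep_n}^{m-1}\to u^{m-1}$, combined with this local two-scale convergence, then yields $z=mu^{m-1}w$.

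A secondary but real error: you have the $r<2$ and $r>2$ behaviours reversed. For $r<2$ the factor $\vep_n^{2-r}\to 0$ kills the $\partial_s$-term, so the cell problem is elliptic with $s$ as a \emph{parameter}; $\Phi_k$ and $z$ \emph{do} depend on $s$. For $r>2$ one uses a different scaling of the test function (namely $\vep_n^{r-1}\phi b\psi c$) to show first that $\partial_s w=0$, hence $z$ is $s$-independent; only then does setting $c\equiv 1$ in Lemma~\ref{L:key} give the $s$-averaged elliptic cell problem \eqref{eq:CP3}.
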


The theorem above provides a characterization of the homogenized matrix $a_{\rm hom}$ in terms of solutions to cell problems depending on the scale parameter $r$; in particular, one can observe that the case $r=2$ is critical. A similar result has already been established for $0 < m < 2$ in~\cite[Theorem 1.4]{AO1}, which however cannot cover the range $m \geq 2$ due to a substantial difficulty. A novelty of the present paper resides in the extension of the result in~\cite{AO1} to the range $m \geq 2$. In~\cite{AO1}, the proof relies on weak two-scale convergences of gradients for $u_\vep$ as well as $u_\vep^m$ as $\vep \to 0_+$, and then, they yield \emph{very weak} two-scale convergences of both $u_\vep$ and $u_\vep^m$ as well (see \S \ref{Ss:wtts} below for definitions), that is,
\begin{align*}
u_\vep(x,t) &= u(x,t) + \vep w(x,t,\tfrac{x}\vep,\tfrac{t}{\vep^r}) + o(\vep),\\
u_\vep^m(x,t) &=  u^m(x,t) + \vep z(x,t,\tfrac{x}\vep,\tfrac{t}{\vep^r}) + o(\vep) \ \mbox{ as } \ \vep \to 0_+
 \end{align*}
in a proper sense for some integrable functions $w = w(x,t,y,s)$ and $z = z(x,t,y,s)$ (see \S \ref{Ss:wtts}). Hence, similarly to formal asymptotic expansion (see \cite{BLP} for details), one can obtain (but rigorously here) the relation, 
$$
\partial_s w = \mathrm{div}_y \left( a(y,s) \left[\nabla_y z + \nabla u^m(x,t)\right] \right) \ \mbox{ in } \T^N \times \T,
$$
which will be used to derive cell problems. Moreover, a key ingredient of the proof is to verify the relation between $w$ and $z$, that is, $z = m u^{m-1} w$, which finally yields the representation of cell problems. Here we stress that both of course coincide for the linear case $m = 1$ (see~\cite{Ho}) and this procedure essentially arises from the nonlinear setting $m \neq 1$. Thus weak two-scale convergences (up to a subsequence) play a crucial role in the proof, and they follow in general from boundedness of sequences in Lebesgue spaces (see \S \ref{Ss:wtts} below). One can easily obtain uniform estimates for $\nabla u_\vep^m$ in $L^2(\Omega \times (0,T))$ due to an energy structure of the equation \eqref{eq:1.1}. On the other hand, it is more delicate to derive uniform estimates for $\nabla u_\vep$ in $L^2(\Omega \times (0,T))$, which are actually established in~\cite{AO1} only for $0 < m < 2$; indeed, one cannot generally expect the same estimates for $m \geq 2$ (see Remark \ref{R:opt} below for more details). In this paper, we shall develop \emph{local} (in space) uniform estimates for $\nabla u_\vep$ under certain assumptions, and then, they entail \emph{local} weak (resp., \emph{local} very weak) two-scale convergence for $\nabla u_\vep$ (resp., for $u_\vep$). Such a local weak two-scale convergence is actually weaker than those proved in~\cite{AO1}; however, we shall finally show that the local convergence is still enough to prove the main result mentioned above.

Moreover, repeating the same argument as in the proof of Theorem 1.6 in~\cite{AO1}, we can also verify the following:
\begin{corollary}\label{C:corr}
Let $1<m<+\infty$ and $0< r<+\infty$. Under the same assumptions as in Theorem \ref{T:main}, assume that 
\begin{itemize} 
 \item[\rm (H)] $a \in L^\infty(0,1;C^\alpha_{\rm per}(\square))$ for some $\alpha \in (0,1)$ if $r \neq 2$\,{\rm ;} $a$ is smooth in $(y,s)$ if $r=2$.
\end{itemize}
Let $u$ be a limit of weak solutions $(u_{\vep_n})$ to \eqref{eq:1.1}--\eqref{eq:1.3} as $\vep_n \to 0_+$ and let $\Phi_{k}$ be the solution of the cell-problem, that is, \eqref{eq:CP1} if $r \in (0,2)$\/{\rm ;} \eqref{eq:CP2} along with \eqref{eq:ahom2} associated with the limit $u$ if $r =2$\/{\rm ;} \eqref{eq:CP3} if $r \in (2,+\infty)$. Then it holds that
\begin{align*}
\lim_{\vep_n\to 0_+} \int_{0}^T\int_{\Omega} \Big| \nabla u_{\vep_n}^m-\nabla u^m -\sum_{k=1}^N (\partial_{x_k} u^m )\nabla_y\Phi_k (x,t,\tfrac{x}{\vep_n},\tfrac{t}{\vep_n^r} ) \Big|^2\, \d x\d t=0.
\end{align*}
Here $\Phi_k$ depends only on $(y,s)$ for $r \in (0,2)$ and on $y$ for $r \in (2,+\infty)$, respectively.
\end{corollary}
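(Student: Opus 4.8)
The plan is to upgrade the weak two-scale convergence of $\nabla u_{\vep_n}^m$ from Theorem \ref{T:AO1} to a strong (corrector-type) convergence, exactly in the spirit of the classical corrector results in periodic homogenization (cf. \cite{BLP}). First I would fix the explicit form of the corrector: by Theorem \ref{T:main}, the two-scale limit of $\nabla u_{\vep_n}^m$ is $a$-independent and equals $\nabla u^m + \nabla_y z$ with $z = \sum_k (\partial_{x_k}u^m)\Phi_k$, so the natural candidate for the corrector term is $\chi_{\vep_n}(x,t) := \sum_{k=1}^N (\partial_{x_k}u^m)(x,t)\,\nabla_y\Phi_k(x,t,\tfrac{x}{\vep_n},\tfrac{t}{\vep_n^r})$ (with $\Phi_k$ independent of $(x,t)$ when $r\neq 2$). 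Here assumption (H) is used to guarantee that $\nabla_y\Phi_k$ is continuous (or at least admissible as a test function for two-scale convergence) so that $\chi_{\vep_n}$ genuinely two-scale converges to $\nabla_y z$; one then also needs $\partial_{x_k}u^m \in L^2(\Omega\times(0,T))$, which holds by \eqref{um:L2H10}.

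The core computation is to expand the square
\[
\int_0^T\!\!\int_\Omega \bigl| \nabla u_{\vep_n}^m - \nabla u^m - \chi_{\vep_n} \bigr|^2 \,\d x\,\d t
= \int_0^T\!\!\int_\Omega |\nabla u_{\vep_n}^m|^2 - 2\,\nabla u_{\vep_n}^m\cdot(\nabla u^m+\chi_{\vep_n}) + |\nabla u^m + \chi_{\vep_n}|^2 \,\d x\,\d t
\]
and pass to the limit term by term. For the cross term and the last term, the factors $\nabla u^m + \chi_{\vep_n}$ two-scale converge to $\nabla u^m + \nabla_y z$, and combined with \eqref{eq:Thm1.1-3} (weak two-scale convergence of $\nabla u_{\vep_n}^m$) and the product rule for weak $\times$ strong two-scale convergence, both pass to $\iint_{\Omega\times(0,T)}\int_0^1\int_\square |\nabla u^m + \nabla_y z|^2$. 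Hence the whole expression reduces to
\[
\limsup_{n\to\infty}\int_0^T\!\!\int_\Omega |\nabla u_{\vep_n}^m|^2\,\d x\,\d t \;-\; \int_0^T\!\!\int_\Omega\int_0^1\!\!\int_\square |\nabla u^m + \nabla_y z|^2\,\d y\,\d s\,\d x\,\d t,
\]
so it suffices to prove the energy convergence
\[
\lim_{n\to\infty}\int_0^T\!\!\int_\Omega |\nabla u_{\vep_n}^m|^2 = \int_0^T\!\!\int_\Omega\int_0^1\!\!\int_\square |\nabla u^m + \nabla_y z|^2.
\]

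The energy identity is where the work concentrates, and I expect it to be the main obstacle. The lower bound ``$\geq$'' is automatic from weak two-scale lower semicontinuity of the $L^2$ norm. For the upper bound, the strategy is to test the equation \eqref{eq:1.1} with $u_{\vep_n}^m$ itself (as is done to derive the basic energy estimate), which gives
\[
\int_0^T\!\!\int_\Omega a(\tfrac{x}{\vep_n},\tfrac{t}{\vep_n^r})\nabla u_{\vep_n}^m\cdot\nabla u_{\vep_n}^m\,\d x\,\d t
= -\langle \partial_t u_{\vep_n}, u_{\vep_n}^m\rangle\text{-term} = \tfrac{1}{m+1}\|u_0\|_{L^{m+1}}^{m+1} - \tfrac{1}{m+1}\|u_{\vep_n}(T)\|_{L^{m+1}}^{m+1},
\]
then pass to the limit using \eqref{eq:Thm1.1-2} (strong convergence of $u_{\vep_n}$) on the right and compare with the analogous energy identity for the homogenized equation \eqref{eq:homeq} tested with $u^m$, namely $\iint j_{\rm hom}\cdot\nabla u^m = \tfrac{1}{m+1}\|u_0\|^{m+1}_{L^{m+1}} - \tfrac{1}{m+1}\|u(T)\|^{m+1}_{L^{m+1}}$; using $j_{\rm hom} = \int_0^1\int_\square a(\nabla u^m + \nabla_y z)$ and the cell equation (which forces $\int_0^1\int_\square a(\nabla u^m+\nabla_y z)\cdot\nabla_y z = 0$), one identifies the limit of the $a$-weighted energies. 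Finally, uniform ellipticity \eqref{H3} together with a two-scale lower semicontinuity argument applied to the \emph{nonnegative} quadratic form $\xi\mapsto a(y,s)\xi\cdot\xi$ converts the convergence of the $a$-weighted energies into the desired convergence of the plain $L^2$ energies — this last step, reconciling the two-scale limit of $a_{\vep_n}\nabla u_{\vep_n}^m\cdot\nabla u_{\vep_n}^m$ with $a(\nabla u^m+\nabla_y z)\cdot(\nabla u^m+\nabla_y z)$ and then peeling off the coefficient, is the delicate point and mirrors the argument already carried out for Theorem 1.6 in \cite{AO1}. For $r=2$ one additionally restricts integration to $[u\neq 0]$ and uses \eqref{eq:ahom2}, the contribution from $[u=0]$ being controlled since there $\nabla u^m = 0$ a.e.\ and $\Phi_k = 0$.
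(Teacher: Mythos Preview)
Your plan is essentially the standard corrector argument and matches what the paper defers to (the proof of Theorem~1.6 in \cite{AO1}); the ingredients you list---the energy identity from testing \eqref{eq:1.1} with $u_{\vep_n}^m$, the cell-equation orthogonality $\int_0^1\int_\square a(\nabla u^m+\nabla_y z)\cdot\nabla_y z=0$, strong two-scale convergence of $\chi_{\vep_n}$ guaranteed by (H), and ellipticity \eqref{H3}---are exactly right.

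Two small points are worth tightening. First, the step ``convergence of $a$-weighted energies $\Rightarrow$ convergence of plain $L^2$ energies'' is not a direct implication, and posing $\lim\int|\nabla u_{\vep_n}^m|^2=\iint|\nabla u^m+\nabla_y z|^2$ as the intermediate target is backwards. The clean route (and what \cite{AO1} does) is to apply ellipticity to the \emph{difference} from the outset,
\[
\lambda\int_0^T\!\!\int_\Omega\bigl|\nabla u_{\vep_n}^m-\nabla u^m-\chi_{\vep_n}\bigr|^2
\;\leq\;
\int_0^T\!\!\int_\Omega a_{\vep_n}\bigl(\nabla u_{\vep_n}^m-\nabla u^m-\chi_{\vep_n}\bigr)\cdot\bigl(\nabla u_{\vep_n}^m-\nabla u^m-\chi_{\vep_n}\bigr),
\]
expand the $a$-weighted square, and show the right-hand side tends to zero using the energy identity for the quadratic term, \eqref{eq:Thm1.1-3} plus strong two-scale convergence of $\nabla u^m+\chi_{\vep_n}$ for the cross term, and the homogenized energy identity together with the cell equation to match the limits. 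Second, \eqref{eq:Thm1.1-2} gives convergence in $L^\rho(0,T;L^{m+1})$, not at $t=T$; to pass to the limit in $\|u_{\vep_n}(T)\|_{L^{m+1}}^{m+1}$ you only need the inequality $\limsup\int a_{\vep_n}|\nabla u_{\vep_n}^m|^2\leq\cdots$, and this follows from weak lower semicontinuity of $\|\cdot\|_{L^{m+1}}$ at $t=T$ (or, alternatively, from integrating the energy identity also in the upper time limit).
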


The rest of the present paper consists of three sections. In Section \ref{S:Pre}, we shall briefly review preliminary facts to be used for proving the main result. To be precise, \S \ref{Ss:wtts} is devoted to a brief summary of the two-scale convergence theory; moreover, \S \ref{Ss:bdd} provides a lemma on (standard) uniform estimates for weak solutions to \eqref{eq:1.1}--\eqref{eq:1.3}. In Section \ref{S:lue}, we shall derive \emph{local} uniform estimates for the gradient $\nabla u_\vep$. Based on them, in Section \ref{S:prf}, we shall give a proof for Theorem \ref{T:main}.

\bigskip
\noindent
{\bf Notation.} Throughout the present paper, $\nabla$ and $\nabla_y$ denote gradient operators with respect to $x$ and $y$, respectively. Moreover, we write $\d Z = \d x \,\d y \,\d t \,\d s$. For $A,B \subset \R^N$, we write $A \Subset B$, when the closure $\overline{A}$ of $A$ in $\R^N$ is included in $B$. Moreover, $\square = (0,1)^N$ is the unit cube in $\R^N$ and $\langle\cdot\rangle_y$ denotes the mean in $y\in \square$, that is,
$\langle g \rangle_y := \int_\square g(y) \, d y$ for $g \in L^1(\square)$.
Define $C^{\infty}_{\rm per}(\square)$ by 
$\{w\in C^{\infty}(\square) \colon w(\cdot+ k e_j)=w(\cdot) \text{ in } \R^N \text{ for } 1\leq j \leq N \text{ and } k\in \Z \}$, and then, 
we also define $W^{1,q}_{\rm per}(\square)$ and $L^q_{\rm per}(\square)$ as closed subspaces of $W^{1,q}(\square)$ and $L^q(\square)$ by
$
W^{1,q}_{\rm per}(\square) = \overline{C^\infty_{\rm per}(\square)}^{\|\cdot\|_{W^{1,q}(\square)}}$, $ L^q_{\rm per}(\square) = \overline{C^\infty_{\rm per}(\square)}^{\|\cdot\|_{L^q(\square)}}$, respectively, for $1\leq q < +\infty$. In particular, we set $H^1_{\rm per}(\square) := W^{1,2}_{\rm per}(\square)$ and write $L^q(\square)$ instead of $L^q_{\rm per}(\square)$, unless any confusion may arise.
Furthermore, we set $H^{1}_{\mathrm{per}}(\square)/\R :=\{w\in H^{1}_{\mathrm{per}}(\square)\colon \langle w\rangle_{y}=0\}$. 
Let $X$ be a normed space  with a norm $\|\cdot\|_X$ and a duality pairing $\langle \cdot, \cdot \rangle_X$ between $X$ and its dual space $X^*$  and denote by $C_{\rm weak}([0,T];X)$ the set of all weakly continuous functions defined on $[0,T]$ with values in $X$.  Moreover, we write $X^N = X \times X \times \cdots \times X$ ($N$-product space), e.g., $[L^2(\Omega)]^N = L^2(\Omega;\R^N)$.

\section{Preliminaries}\label{S:Pre}

\subsection{Two-scale convergence theory}\label{Ss:wtts}

In this subsection, we briefly review a \emph{space-time two-scale convergence theory} developed in~\cite{Ho}.
The notion of two-scale convergence was first proposed by G.~Nguetseng~\cite{Ng}, and then, developed by G.~Allaire~\cite{Al1} (see also, e.g.,~\cite{LNW,Vis06,Zhi}).
Throughout this section, we always assume that
\[
  0<r<+\infty,\quad 1< q< +\infty,
\]
unless noted otherwise.
The notion of \emph{weak space-time two-scale convergence} is defined by

\begin{definition}[Weak space-time two-scale convergence]
A bounded sequence $(v_{\vep})$ in $L^q(\Omega\times (0,T))$ is said to \emph{weakly space-time two-scale converge} to a function $v$ in $L^q(\Omega\times (0,T)\times\square\times (0,1))$ as $\vep\to 0_+$, if it holds that
\begin{align*}
 \lefteqn{\lim_{\vep\to 0_+}\int_0^T\int_{\Omega}v_{\vep}(x,t)\phi(x)b(\tfrac{x}{\vep})\psi(t)c(\tfrac{t}{\vep^r})\, \d x\d t}\\
 &= \vierint v(x,t,y,s)\phi(x)b(y)\psi(t)c(s)\, \d Z\nonumber
\end{align*}
for any $\phi\in C^{\infty}_{\rm c}(\Omega)$, $b\in C^{\infty}_{\mathrm{per}}(\square)$, $\psi\in C^{\infty}_{\rm c}(0,T)$ and $c\in C^{\infty}_{\rm per}([0,1])$.
Then we write
\[
   v_{\vep}\wtts v\quad \text{ in } L^q(\Omega\times\square\times (0,T)\times (0,1)).
\]
\end{definition}

The following theorem is concerned with weak two-scale compactness of bounded sequences in $L^q(\Omega\times (0,T))$.
\begin{theorem}[Weak space-time two-scale compactness]\label{T:wttscpt}
For any bounded sequence $(v_{\vep})$ in $L^q(\Omega\times (0,T))$, there exist a subsequence $\vep_n \to 0_+$ of $(\vep)$ and a function $v\in L^q(\Omega\times (0,T)\times \square\times (0,1)) $ such that
\[
  v_{\vep_n} \wtts v\quad \text{ in } L^q(\Omega\times (0,T)\times \square\times (0,1)) .
\]
\end{theorem}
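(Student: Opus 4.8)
The plan is to prove Theorem~\ref{T:wttscpt} by the standard Allaire--Nguetseng argument adapted to the space-time periodicity, treating the ``unfolded'' variable $(y,s)\in\square\times(0,1)$ on equal footing with a genuine spatial variable. First I would introduce the separable test-function space
\[
\mathcal{D} := C^\infty_{\rm c}(\Omega)\otimes C^\infty_{\rm per}(\square)\otimes C^\infty_{\rm c}(0,T)\otimes C^\infty_{\rm per}([0,1]),
\]
whose closure in $L^{q'}(\Omega\times(0,T)\times\square\times(0,1))$ (with $q'=q/(q-1)$) is all of that space. For each $\varphi(x,t,y,s)=\phi(x)b(y)\psi(t)c(s)\in\mathcal D$ one forms the oscillating test function $\varphi_\vep(x,t):=\phi(x)\psi(t)\,b(x/\vep)\,c(t/\vep^r)$ and defines the linear functional
\[
L_\vep(\varphi) := \int_0^T\!\!\int_\Omega v_\vep(x,t)\,\varphi_\vep(x,t)\,\d x\,\d t.
\]
The first key step is the \emph{uniform boundedness} $|L_\vep(\varphi)|\le \|v_\vep\|_{L^q(\Omega\times(0,T))}\,\|\varphi_\vep\|_{L^{q'}(\Omega\times(0,T))}$ by H\"older, combined with the \emph{mean-value (Riemann--Lebesgue) lemma for periodic functions}: $\|\varphi_\vep\|_{L^{q'}(\Omega\times(0,T))}\to \|\varphi\|_{L^{q'}(\Omega\times(0,T)\times\square\times(0,1))}$ as $\vep\to0_+$. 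Hence $(L_\vep(\varphi))_\vep$ is bounded, uniformly over $\varphi$ in bounded subsets of $\mathcal D$ with the $L^{q'}$-norm.

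Next I would extract the subsequence. Since $L^{q'}(\Omega\times(0,T)\times\square\times(0,1))$ is separable, pick a countable dense subset $\{\varphi_j\}\subset\mathcal D$; by a diagonal argument over $j$ choose a subsequence $\vep_n\to0_+$ so that $L_{\vep_n}(\varphi_j)$ converges for every $j$. The uniform estimate of the previous step shows the map $\varphi\mapsto\lim_n L_{\vep_n}(\varphi)$ extends to a bounded linear functional $L$ on $L^{q'}(\Omega\times(0,T)\times\square\times(0,1))$ with norm at most $\limsup_n\|v_{\vep_n}\|_{L^q(\Omega\times(0,T))}<\infty$. By duality (here $1<q'<\infty$, so the dual of $L^{q'}$ is $L^q$) there is $v\in L^q(\Omega\times(0,T)\times\square\times(0,1))$ representing $L$, i.e.
\[
\lim_{n\to\infty} L_{\vep_n}(\varphi) = \vierint v\,\varphi\,\d Z\qquad\text{for all }\varphi\in\mathcal D,
\]
which is exactly the definition of $v_{\vep_n}\wtts v$.

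The main technical obstacle I anticipate is the passage from convergence on the countable set $\{\varphi_j\}$ to convergence on all of $\mathcal D$ and the identification of the limit functional with an $L^q$-function; this rests on two ingredients that must be stated carefully. One is the Riemann--Lebesgue-type convergence $\|\varphi_\vep\|_{L^{q'}}\to\|\varphi\|_{L^{q'}}$ for products of compactly supported and periodic factors, together with the slightly subtler fact that for a fixed smooth $\varphi$ the family $(\varphi_\vep)$ is itself bounded in $L^{q'}(\Omega\times(0,T))$ \emph{uniformly in }\vep, so that a $3\vep$-argument transfers the convergence from the dense set to all of $\mathcal D$. The other is that $\mathcal D$ is dense in $L^{q'}(\Omega\times(0,T)\times\square\times(0,1))$, so that $L$ is determined on a dense subspace and the Riesz-type representation applies. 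Once these are in place the proof is routine; since Theorem~\ref{T:wttscpt} is quoted from~\cite{Ho}, I would present only this outline and refer to~\cite{Ho} (and to the original treatments in~\cite{Ng,Al1}) for the full details, including the adaptation of the mean-value lemma to the anisotropic scaling $t\mapsto t/\vep^r$.
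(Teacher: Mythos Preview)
Your outline is correct and matches the paper's treatment: the paper simply refers to \cite[Theorem~2.3]{Ho} for this result, and the argument you sketch is precisely the standard Allaire--Nguetseng compactness proof (adapted to the space-time periodic setting) carried out there. Your identification of the two technical ingredients---the mean-value lemma for periodic functions under the anisotropic scaling and the density of tensor products in $L^{q'}$---is accurate, and your decision to present only the outline while citing \cite{Ho,Ng,Al1} is exactly what the paper does.
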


\begin{proof}
See \cite[Theorem 2.3]{Ho}.
\end{proof}

Furthermore, as for the two-scale compactness of gradients, we have
\begin{theorem}[Weak space-time two-scale compactness for gradients]\label{T:wttscpt_grad}\
Let $(v_{\vep})$ be a bounded sequence in $L^{q}(0,T;W^{1,q}(\Omega))$ such that $v_{\vep_n} \to v$ in $L^q(\Omega \times (0,T))$ for some subsequence $(\vep_n)\to 0_+$ of $(\vep)$ and a limit $v$.
Then there exist a {\rm(}not relabeled{\rm )} subsequence of $\vep_n\to 0_+$ and a function $z\in L^q(\Omega\times (0,T);L^{q}(0,1; W^{1,q}_{\mathrm{per}}(\square)/\R))$ such that
\begin{align*}
  \nabla v_{\vep_n}\wtts \nabla v+\nabla_y z\quad \text{ in }\ [L^q(\Omega\times (0,T)\times \square\times (0,1))]^N.
\end{align*}
\end{theorem}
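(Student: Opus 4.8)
\medskip

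\noindent\textbf{Proposed proof.} The plan is to run the classical Nguetseng--Allaire argument for two-scale limits of gradients, with the slow time $t$ and the fast time $s$ merely along for the ride. Since $(v_\vep)$ is bounded in $L^q(0,T;W^{1,q}(\Omega))$, its gradient $(\nabla v_\vep)$ is bounded in $[L^q(\Omega\times(0,T))]^N$, so Theorem~\ref{T:wttscpt}, applied componentwise, yields — after passing to a further (not relabeled) subsequence of $(\vep_n)$ — a limit $\xi\in[L^q(\Omega\times(0,T)\times\square\times(0,1))]^N$ with $\nabla v_{\vep_n}\wtts\xi$. The uniform gradient bound and the assumed strong convergence $v_{\vep_n}\to v$ in $L^q(\Omega\times(0,T))$ also force $v\in L^q(0,T;W^{1,q}(\Omega))$ with $\nabla v_{\vep_n}\rightharpoonup\nabla v$ weakly in $[L^q(\Omega\times(0,T))]^N$. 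It then remains to identify $\xi=\nabla v+\nabla_y z$ for some $z$ of the asserted class.

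\medskip

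\noindent\emph{Step 1: orthogonality against $y$-solenoidal test fields.} Fix $\Psi\in[C^{\infty}_{\rm c}(\Omega\times(0,T);C^{\infty}_{\rm per}(\square\times[0,1]))]^N$, periodic in $(y,s)$, with $\dv_y\Psi\equiv 0$, and set $\Psi^{\vep}(x,t):=\Psi(x,t,\tfrac{x}{\vep},\tfrac{t}{\vep^r})$. Integration by parts in $x$ (no boundary terms, by compact support in $\Omega$), together with the cancellation of the singular term $\tfrac1\vep(\dv_y\Psi)(x,t,\tfrac{x}{\vep},\tfrac{t}{\vep^r})\equiv 0$, gives
\[
\int_0^T\!\!\int_{\Omega}\nabla v_{\vep_n}\cdot\Psi^{\vep_n}\,\d x\,\d t
=-\int_0^T\!\!\int_{\Omega}v_{\vep_n}\,(\dv_x\Psi)(x,t,\tfrac{x}{\vep_n},\tfrac{t}{\vep_n^r})\,\d x\,\d t .
\]
Letting $n\to\infty$, the left-hand side tends to $\vierint\xi\cdot\Psi\,\d Z$ by $\nabla v_{\vep_n}\wtts\xi$ (after the routine passage from product test functions to general smooth $\Psi$ by density); the right-hand side tends to $-\int_0^T\int_\Omega v(x,t)\,\overline{\Psi}(x,t)\,\d x\,\d t$, where $\overline{\Psi}(x,t):=\int_0^1\!\int_\square(\dv_x\Psi)(x,t,y,s)\,\d y\,\d s$, because $v_{\vep_n}\to v$ strongly in $L^q(\Omega\times(0,T))$ while the oscillating coefficient converges weakly to its cell-average in $L^{q'}(\Omega\times(0,T))$. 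A further integration by parts in $x$ rewrites this limit as $\vierint\nabla v\cdot\Psi\,\d Z$, whence
\[
\vierint(\xi-\nabla v)\cdot\Psi\,\d Z=0
\]
for every such $y$-solenoidal $\Psi$.

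\medskip

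\noindent\emph{Step 2: recovering the corrector $z$.} Put $\eta:=\xi-\nabla v$. Taking $\Psi(x,t,y,s)=\phi(x)\psi(t)c(s)\widetilde\Psi(y)$ with $\phi\in C^{\infty}_{\rm c}(\Omega)$, $\psi\in C^{\infty}_{\rm c}(0,T)$, $c\in C^{\infty}_{\rm per}([0,1])$ and $\widetilde\Psi$ periodic with $\dv_y\widetilde\Psi\equiv 0$, and letting $\widetilde\Psi$ range over a countable dense family, the relation of Step~1 localizes to
\[
\int_\square\eta(x,t,y,s)\cdot\widetilde\Psi(y)\,\d y=0
\quad\text{for a.e.\ }(x,t,s)\text{ and all periodic }\widetilde\Psi\text{ with }\dv_y\widetilde\Psi=0 .
\]
For a.e.\ fixed $(x,t,s)$ this says that $\eta(x,t,\cdot,s)\in[L^q(\square)]^N$ lies in the annihilator of the divergence-free periodic fields; the Helmholtz--de Rham decomposition on the torus $\T^N$, whose Leray projection is a bounded Fourier multiplier on $[L^q(\T^N)]^N$ for $1<q<+\infty$, then provides a unique $z(x,t,\cdot,s)\in W^{1,q}_{\mathrm{per}}(\square)/\R$ with $\nabla_y z(x,t,\cdot,s)=\eta(x,t,\cdot,s)$ and $\|\nabla_y z(x,t,\cdot,s)\|_{L^q(\square)}\leq C_q\|\eta(x,t,\cdot,s)\|_{L^q(\square)}$. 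Since $z(x,t,\cdot,s)$ is the image of $\eta(x,t,\cdot,s)$ under a fixed bounded linear operator, $(x,t,s)\mapsto z(x,t,\cdot,s)$ is strongly measurable, and the pointwise estimate integrates to $z\in L^q(\Omega\times(0,T);L^q(0,1;W^{1,q}_{\mathrm{per}}(\square)/\R))$. With this $z$ one has $\nabla v_{\vep_n}\wtts\nabla v+\nabla_y z$, as claimed.

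\medskip

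\noindent\emph{Main obstacle.} Step~1 is essentially bookkeeping once the integration by parts is arranged so that solenoidality kills the $\vep^{-1}$ term. The genuinely delicate point is Step~2: passing from the globally integrated orthogonality to the fiberwise statement (a measurable-selection / Lebesgue-point argument) and, for a general exponent $q\neq 2$, invoking the $L^q$-boundedness of the Leray projection on the torus to extract the corrector with the correct measurability and integrability. I would expect the final write-up to reduce this to the scalar Helmholtz-type facts already underlying the $\vep$-free theory, carrying the parameters $(x,t,s)$ along inertly; alternatively one may simply invoke the corresponding statement in~\cite{Ho}.
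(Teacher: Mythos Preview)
Your argument is correct and follows the standard Nguetseng--Allaire route (solenoidal test fields kill the $\vep^{-1}$ term, then Helmholtz decomposition on $\T^N$ recovers the corrector), which is precisely the content of~\cite[Theorem~3.1]{Ho}; the paper itself gives no independent proof and simply cites that reference. In effect you have written out what the paper delegates, so there is nothing to compare beyond noting that your closing remark ``alternatively one may simply invoke the corresponding statement in~\cite{Ho}'' \emph{is} the paper's proof.
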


\begin{proof}
See \cite[Theorem 3.1]{Ho}. 
\end{proof}

As a corollary, we obtain
\begin{corollary}[Very weak two-scale convergence]\label{C:vw}
Under the same assumptions as in Theorem \ref{T:wttscpt_grad}, it holds that
\begin{align*}
\lim_{\vep_n\to 0_+} &\int_{0}^{T}\int_{\Omega}\frac{v_{\vep_n}(x,t)-v(x,t)}{\vep_n} \phi(x)b(\tfrac{x}{\vep_n})\psi(t)c(\tfrac{t}{\vep_n^{r}})\, \d x\d t\\
&= \vierint z(x,t,y,s) \phi(x)b(y)\psi(t)c(t)\, \d Z\nonumber
\end{align*}
for any $\phi\in C^{\infty}_{\rm c}(\Omega)$, $b\in C^{\infty}_{\mathrm{per}}(\square)/\R$ {\rm (}i.e., $\langle b \rangle_y = 0${\rm )}, $\psi\in C^{\infty}_{\rm c}(0,T)$ and $c\in C^{\infty}_{\mathrm{per}}([0,1])$.
\end{corollary}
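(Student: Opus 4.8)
\textbf{Proof proposal for Corollary \ref{C:vw}.}

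The plan is to deduce the very weak convergence statement from Theorem \ref{T:wttscpt_grad} by an integration-by-parts argument in the $x$ variable, exploiting the mean-zero condition $\langle b\rangle_y = 0$. First I would fix a test triple $\phi\in C^\infty_{\rm c}(\Omega)$, $b\in C^\infty_{\rm per}(\square)/\R$, $\psi\in C^\infty_{\rm c}(0,T)$, $c\in C^\infty_{\rm per}([0,1])$, and introduce the corrector $B\in C^\infty_{\rm per}(\square)$ solving $\Delta_y B = b$ on $\T^N$ (solvable precisely because $\langle b\rangle_y=0$), so that $b = \mathrm{div}_y(\nabla_y B)$. The key identity is then $\nabla_x\bigl[B(\tfrac{x}{\vep_n})\bigr] = \tfrac1{\vep_n}(\nabla_y B)(\tfrac{x}{\vep_n})$, which lets me trade the factor $\tfrac1{\vep_n}$ against a spatial gradient: writing $b(\tfrac{x}{\vep_n}) = \mathrm{div}_y\bigl((\nabla_y B)(\tfrac{x}{\vep_n})\bigr)$ is not quite what we want; rather, the useful rewriting is to note that $\tfrac1{\vep_n}\,b(\tfrac{x}{\vep_n})$ is, up to lower-order terms, a spatial divergence of an oscillating vector field whose two-scale limit is controlled.

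More precisely, the second step is to integrate by parts in $x$ in the expression
\[
\int_0^T\int_\Omega \frac{v_{\vep_n}-v}{\vep_n}\,\phi(x)\,b(\tfrac{x}{\vep_n})\,\psi(t)\,c(\tfrac{t}{\vep_n^r})\,\d x\,\d t
\]
after replacing $\tfrac1{\vep_n}b(\tfrac{x}{\vep_n})$ using $b(\tfrac{x}{\vep_n}) = \vep_n\,\mathrm{div}_x\bigl[(\nabla_y B)(\tfrac{x}{\vep_n})\bigr]$, which turns the integral into
\[
-\int_0^T\int_\Omega (v_{\vep_n}-v)\,\mathrm{div}_x\bigl[\phi(x)(\nabla_y B)(\tfrac{x}{\vep_n})\bigr]\,\psi(t)\,c(\tfrac{t}{\vep_n^r})\,\d x\,\d t,
\]
with the boundary term vanishing since $\phi$ has compact support. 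Expanding the divergence produces a term with $\nabla\phi\cdot(\nabla_y B)(\tfrac{x}{\vep_n})$ (bounded test function, times $v_{\vep_n}-v\to 0$ strongly in $L^q$, hence negligible) and a term $\phi(x)\,(\Delta_y B)(\tfrac{x}{\vep_n}) = \phi(x)\,b(\tfrac{x}{\vep_n})$ times $(v_{\vep_n}-v)$; but this last reproduces the original shape without the $\tfrac1{\vep_n}$. The cleaner route is therefore to instead integrate by parts the other way: move the derivative onto $v_{\vep_n}-v$, i.e. test the two-scale convergence $\nabla v_{\vep_n}\wtts \nabla v + \nabla_y z$ from Theorem \ref{T:wttscpt_grad} against the oscillating vector field $\phi(x)(\nabla_y B)(\tfrac{x}{\vep_n})\psi(t)c(\tfrac{t}{\vep_n^r})$. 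Since $\nabla_y B\in[C^\infty_{\rm per}(\square)]^N$ is an admissible two-scale test field componentwise, this gives
\[
\int_0^T\int_\Omega \nabla v_{\vep_n}\cdot (\nabla_y B)(\tfrac{x}{\vep_n})\,\phi\,\psi\,c(\tfrac{t}{\vep_n^r})\,\d x\,\d t
\longrightarrow \vierint (\nabla v + \nabla_y z)\cdot\nabla_y B\,\phi\,\psi\,c\,\d Z.
\]
On the right, $\int_\square \nabla v\cdot\nabla_y B\,\d y = \nabla v\cdot\langle\nabla_y B\rangle_y = 0$ (gradient of a periodic function has zero mean), while $\int_\square \nabla_y z\cdot\nabla_y B\,\d y = -\int_\square z\,\Delta_y B\,\d y = -\int_\square z\,b\,\d y$ after integrating by parts in $y$ and using periodicity; hence the limit equals $-\vierint z\,b\,\phi\,\psi\,c\,\d Z$. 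On the left-hand side, I integrate by parts in $x$: $\nabla v_{\vep_n}\cdot(\nabla_y B)(\tfrac{x}{\vep_n}) = \mathrm{div}_x\bigl[v_{\vep_n}(\nabla_y B)(\tfrac{x}{\vep_n})\bigr] - v_{\vep_n}\,\tfrac1{\vep_n}(\Delta_y B)(\tfrac{x}{\vep_n}) = \mathrm{div}_x[\cdots] - \tfrac1{\vep_n}v_{\vep_n}\,b(\tfrac{x}{\vep_n})$; multiplying by $\phi\psi c(\tfrac{t}{\vep_n^r})$ and integrating, the divergence term integrates by parts onto $\nabla\phi$ (times $v_{\vep_n}(\nabla_y B)(\tfrac{x}{\vep_n})$, which converges by Theorem \ref{T:wttscpt}, but against a factor with no blow-up, so this contributes a finite limit that one checks cancels; in fact, more simply, repeating the same computation with $v$ in place of $v_{\vep_n}$ and subtracting removes it). Carrying out the same manipulation for the constant-in-$\vep$ function $v$ — whose contribution $\tfrac1{\vep_n}\int v\,b(\tfrac{x}{\vep_n})\phi\psi c(\tfrac{t}{\vep_n^r})$ tends to zero by the mean-zero condition on $b$ and oscillatory averaging — and subtracting, I am left with exactly $-\lim\tfrac1{\vep_n}\int(v_{\vep_n}-v)\,b(\tfrac{x}{\vep_n})\phi\psi c(\tfrac{t}{\vep_n^r}) = -\vierint z\,b\,\phi\,\psi\,c\,\d Z$, which rearranges to the claimed identity (noting the typographical $c(t)$ in the statement should read $c(s)$).

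The main obstacle I expect is the careful bookkeeping of the $\mathrm{div}_x$-boundary/remainder terms generated by the product rule: one must verify that every term other than $\tfrac1{\vep_n}(v_{\vep_n}-v)\,b(\tfrac{x}{\vep_n})$ and the two-scale limit integrand either vanishes in the limit (because it carries a factor $v_{\vep_n}-v\to 0$ strongly in $L^q$ against an $L^{q'}$-bounded oscillating coefficient) or cancels against its counterpart in the $v$-computation. The cleanest presentation is probably to do the entire argument with $v_{\vep_n}-v$ as a single object from the outset, using that $v_{\vep_n}-v\to 0$ strongly in $L^q(\Omega\times(0,T))$ and $\nabla(v_{\vep_n}-v)\wtts \nabla_y z$ (which follows from Theorem \ref{T:wttscpt_grad} since $\nabla v$ has the trivial two-scale limit $\nabla v$), so that only the genuinely singular term survives. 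Everything else — admissibility of $\nabla_y B$ as a two-scale test field, the identity $\langle \nabla_y B\rangle_y=0$, and the $y$-integration by parts $\int_\square\nabla_y z\cdot\nabla_y B = -\int_\square z\,b$ — is routine given the definitions recalled above.
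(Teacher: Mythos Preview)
The paper does not actually give its own proof of this corollary; it simply cites \cite[Corollary~3.3]{Ho}. Your argument is correct and is precisely the standard route to such very-weak two-scale convergence statements: introduce a periodic potential $B\in C^\infty_{\rm per}(\square)$ with $\Delta_y B=b$ (available because $\langle b\rangle_y=0$), test the gradient two-scale convergence $\nabla(v_{\vep_n}-v)\wtts \nabla_y z$ against $\phi(x)(\nabla_y B)(\tfrac{x}{\vep_n})\psi(t)c(\tfrac{t}{\vep_n^r})$, and integrate by parts in $x$ and in $y$ to recover the desired identity. The only residual term carries $v_{\vep_n}-v$ (strongly convergent to zero) against a bounded oscillating coefficient and hence vanishes; your observation that working with $v_{\vep_n}-v$ from the start avoids any spurious cancellations is exactly right. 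This is almost certainly what Holmbom does as well, so there is no meaningful methodological difference to report.

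Two minor remarks on presentation. First, the middle portion of your write-up explores a couple of false starts before landing on the clean argument; in a final version you should go straight to the version that tests $\nabla(v_{\vep_n}-v)$ against $\phi\,\nabla_y B(\tfrac{x}{\vep_n})\,\psi\,c$. Second, you correctly flagged the typo $c(t)$ for $c(s)$ in the statement; the same slip does not affect the proof.
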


\begin{proof}
See \cite[Corollary 3.3]{Ho}.
\end{proof}

\subsection{Uniform estimates}\label{Ss:bdd}

In this subsection, we shall recall uniform estimates established in~\cite[Lemma 4.1]{AO1} for solutions $u_\vep$ to \eqref{eq:1.1}--\eqref{eq:1.3} as $\vep \to 0_+$.

\begin{lemma}[Uniform estimates~\cite{AO1}]\label{L:bdd}
Let $0<m,r<+\infty$. For each $\vep \in (0,1)$ let $u_{\vep}\in W^{1,2}(0,T;H^{-1}(\Omega))$ be the unique weak solution to \eqref{eq:1.1}--\eqref{eq:1.3} under the same assumptions as in Theorem \ref{T:AO1}. Then the following {\rm(i)} and {\rm (ii)}  hold true\/{\rm :}
\begin{enumerate}
 \item[(i)] $(u_{\vep}^m) $ is bounded in $L^2(0,T;H^1_{0}(\Omega))\cap L^{\infty}(0,T;L^{(m+1)/m}(\Omega))$ and $(u_{\vep})$ is bounded in $L^{\infty}(0,T;L^{m+1}(\Omega))$,
 \item[(ii)] $(\partial_tu_{\vep})$ is bounded in $L^2(0,T;H^{-1}(\Omega))$
\end{enumerate}
for $\vep \in (0,1)$. In addition, if $m\in (1,2)$, 
then it holds that
\begin{enumerate}
 \item[(iii)] $(u_{\vep})$ is bounded in $ L^{2}(0,T;H^1_0(\Omega))$ for $\vep \in (0,1)$.
\end{enumerate}
\end{lemma}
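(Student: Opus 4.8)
The plan is to read off all three assertions from elementary energy identities for the weak formulation of \eqref{eq:1.1}--\eqref{eq:1.3}, tested against suitable powers of $u_\vep$. The point to keep in mind is that every constant produced depends only on $m$, $\lambda$, $\Omega$ and $\|u_0\|_{L^{m+1}(\Omega)}$; in particular it is insensitive to the oscillation in $a(\tfrac x\vep,\tfrac t{\vep^r})$, since only the $\vep$-independent bounds \eqref{H3} will enter. Hence the estimates are automatically uniform in $\vep\in(0,1)$.

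\emph{Assertions {\rm (i)} and {\rm (ii)}.} First I would test the equation with $u_\vep^m$, the natural energy for \eqref{eq:1.1}. Using that $s\mapsto\int_0^s\sigma^m\,\d\sigma = s^{m+1}/(m+1)$ for the parabolic term and the ellipticity lower bound in \eqref{H3} for the diffusion term, one obtains, for a.e.\ $t\in(0,T)$,
\[
\frac{1}{m+1}\frac{\d}{\d t}\|u_\vep(t)\|_{L^{m+1}(\Omega)}^{m+1}+\lambda\|\nabla u_\vep^m(t)\|_{[L^2(\Omega)]^N}^2\le 0 .
\]
Integrating in time and invoking $u_0\in L^{m+1}(\Omega)$ (see \eqref{H1}) bounds $(u_\vep)$ in $L^\infty(0,T;L^{m+1}(\Omega))$ and $(\nabla u_\vep^m)$ in $[L^2(\Omega\times(0,T))]^N$; since $u_\vep^m$ satisfies the homogeneous Dirichlet condition \eqref{eq:1.2}, the Poincar\'e inequality upgrades the latter to a bound on $(u_\vep^m)$ in $L^2(0,T;H^1_0(\Omega))$, while $\|u_\vep^m\|_{L^{(m+1)/m}(\Omega)}^{(m+1)/m}=\|u_\vep\|_{L^{m+1}(\Omega)}^{m+1}$ gives the remaining bound in (i). For (ii) I would simply read off from the weak form that $\langle\partial_tu_\vep(t),\phi\rangle_{H^1_0(\Omega)}=-\int_\Omega a(\tfrac x\vep,\tfrac t{\vep^r})\nabla u_\vep^m(t)\cdot\nabla\phi\,\d x$, so the upper bound in \eqref{H3} yields $\|\partial_tu_\vep(t)\|_{H^{-1}(\Omega)}\le\|\nabla u_\vep^m(t)\|_{[L^2(\Omega)]^N}$; squaring, integrating in $t$, and using (i) finishes (ii).

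\emph{Assertion {\rm (iii)}.} For $m\in(1,2)$ I would instead test the equation with $u_\vep^{2-m}$ (note $2-m\in(0,1)$). The key algebraic fact is that $\nabla u_\vep^m=mu_\vep^{m-1}\nabla u_\vep$ and $\nabla u_\vep^{2-m}=(2-m)u_\vep^{1-m}\nabla u_\vep$ are parallel, so the powers of $u_\vep$ cancel in the diffusion term:
\[
a(\tfrac x\vep,\tfrac t{\vep^r})\nabla u_\vep^m\cdot\nabla u_\vep^{2-m}=m(2-m)\,a(\tfrac x\vep,\tfrac t{\vep^r})\nabla u_\vep\cdot\nabla u_\vep\ge\lambda m(2-m)|\nabla u_\vep|^2 ,
\]
with $m(2-m)>0$. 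Handling the parabolic term via $s\mapsto\int_0^s\sigma^{2-m}\,\d\sigma=s^{3-m}/(3-m)$, one arrives at
\[
\frac{1}{3-m}\frac{\d}{\d t}\|u_\vep(t)\|_{L^{3-m}(\Omega)}^{3-m}+\lambda m(2-m)\|\nabla u_\vep(t)\|_{[L^2(\Omega)]^N}^2\le 0 .
\]
Here the crucial observation is that $3-m<m+1$ precisely because $m>1$, so $L^{m+1}(\Omega)\hookrightarrow L^{3-m}(\Omega)$ on the bounded domain $\Omega$ and the initial datum controls the right-hand side after time integration; the resulting bound on $(\nabla u_\vep)$ in $[L^2(\Omega\times(0,T))]^N$ together with the homogeneous Dirichlet condition gives (iii) by Poincar\'e. (This is exactly where $m<2$ is used, and it also explains why one cannot expect (iii) for $m\ge2$ without extra hypotheses on $u_0$, which is the point taken up later in the paper.)

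\emph{Main obstacle.} The identities above are formal: $u_\vep^{2-m}$ — and, strictly, even the chain rule used for $\langle\partial_tu_\vep,u_\vep^m\rangle$ — are not a priori legitimate, since $u_\vep^{2-m}$ is not known to lie in $L^2(0,T;H^1_0(\Omega))$ before (iii) is established. I expect this to be the only genuine difficulty, and I would resolve it as in \cite{AO1}: perform all the manipulations on the non-degenerate approximation of \eqref{eq:1.1} (replacing $s\mapsto s^m$ by a smooth, uniformly monotone regularization $\beta_\delta$), for which every test function is admissible and all computations are licit, derive the $\delta$-uniform bounds there, and pass to the limit $\delta\to0_+$ using weak lower semicontinuity of the norms together with the convergence of the approximate solutions to $u_\vep$. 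Since only \eqref{H3} and \eqref{H1} enter, nothing in this step sees the periodic oscillation, which is why the constants are uniform in $\vep$.
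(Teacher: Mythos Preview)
The paper does not actually prove this lemma; it simply recalls it from \cite[Lemma 4.1]{AO1}. Your proposal is correct and is essentially the standard energy argument that \cite{AO1} carries out: test with $u_\vep^m$ for (i)--(ii), and with $u_\vep^{2-m}$ for (iii), then justify via regularization.

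It is worth noting that your derivation of (iii) is exactly the global (uncut-off) version of the computation the present paper performs in Section~\ref{S:lue}: there the authors test \eqref{eq:1.1} with $(2-m)^{-1}u_\vep^{2-m}\rho^2$ for a cutoff $\rho$, obtaining \eqref{ei1}, and your identity is the case $\rho\equiv 1$ (which is admissible precisely when $m<2$, since then $2-m>0$ and the test function satisfies the Dirichlet condition globally). Regarding your ``main obstacle'', the paper's own justification scheme in Section~\ref{S:lue} is slightly different from what you propose: rather than regularizing the nonlinearity $s\mapsto s^m$, they shift the test function to $\sigma_\delta(u_\vep)=(2-m)^{-1}(u_\vep+\delta)^{2-m}$ and, when needed, pass through classical approximate solutions $u_\vep^{(n)}\geq 1/n$ as in \cite[\S 5.4]{Vazquez}, letting $n\to\infty$ first and then $\delta\to 0_+$. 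Either route works; yours is closer in spirit to the existence proof in \cite{AO1}, theirs stays at the level of the solution $u_\vep$ itself.
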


\section{Local uniform estimates for $\nabla u_\vep$}\label{S:lue}

In this section, we shall establish \emph{local} (in space) uniform estimates for $\nabla u_\vep$, which will be used to derive \emph{very weak} two-scale convergence of $(\nabla u_\vep)$ as $\vep \to 0_+$, and then, it will play a crucial role in the proof of Theorem \ref{T:main}. Furthermore, we shall see in Remark \ref{R:opt} below that these local uniform estimates are intrinsic to the porous medium equation and they may not be substantially improved anymore. For simplicity, we shall first exhibit a formal derivation, which may not be justified immediately under the present setting (in particular, due to the regularity of weak solutions). However, we shall also mention how to justify such a formal derivation later. In what follows, we shall simply write $a_\vep$ instead of $a(\tfrac x \vep, \tfrac t {\vep^r})$, unless any confusion may arise. For each $\omega \Subset \Omega$, let $\rho \in C^\infty_{\rm c}(\Omega)$ be such that
$$
0 \leq \rho \leq 1 \ \mbox{ in } \Omega, \quad \rho \equiv 1 \ \mbox{ on } \omega, \quad \mathrm{supp} \, \rho \Subset \Omega
$$
and set $M_\rho := \sup_{x \in \Omega} |\nabla \rho(x)| < + \infty$.

We first handle the case $m \neq 2,3$. Formally, test \eqref{eq:1.1} by $(2-m)^{-1} u_\vep^{2-m} \rho^2$, which satisfies the homogeneous Dirichlet condition. Then we see that
\begin{align*}
  \frac1{(2-m)(3-m)} \frac{\d}{\d t} \left( \int_\Omega u_\vep^{3-m} \rho^2\, \d x \right) + \frac 1 {2-m}\int_\Omega a_\vep \nabla u_\vep^m \cdot \nabla \left( u_\vep^{2-m} \rho^2 \right)\, \d x= 0.
\end{align*}
Here we observe that
\begin{align*}
\lefteqn{
\frac 1 {2-m} \int_\Omega a_\vep \nabla u_\vep^m \cdot \nabla \left( u_\vep^{2-m} \rho^2 \right) \, \d x
}\\
&= m  \int_\Omega a_\vep \nabla u_\vep \cdot (\nabla u_\vep) \rho^2\, \d x
+ \frac{2m}{2-m} \int_\Omega u_\vep a_\vep \nabla u_\vep \cdot (\nabla \rho) \rho \, \d x\\
&\geq \lambda m \int_\Omega |\nabla u_\vep|^2 \rho^2 \, \d x - \frac{2m M_\rho}{m-2} \int_\Omega |u_\vep| |\nabla u_\vep| \rho \, \d x\\
&\geq \frac{\lambda m}2 \int_\Omega |\nabla u_\vep|^2 \rho^2 \, \d x - C_\omega \int_\Omega |u_\vep|^2 \, \d x.
\end{align*}
Hence using (i) of Lemma \ref{L:bdd}, we deduce that, for any $\omega \Subset \Omega$, there exists a constant $C_\omega \geq 0$ such that
\begin{equation}\label{ei1}
\frac1{(2-m)(3-m)} \frac{\d}{\d t} \left( \int_\Omega u_\vep^{3-m} \rho^2 \, \d x \right) + \frac{\lambda m}2 \|\nabla u_\vep\|_{L^2(\omega)}^2 \leq C_\omega.
\end{equation}
Here and henceforth, we shall denote by $C_\omega$ general constants which depend on $\omega \Subset \Omega$ but are independent of $\vep \in (0,1)$, $x \in \Omega$ and $t \in (0,T)$ and which may vary from line to line. In case $1 < m < 2$, one can obtain uniform estimates even for (possibly) sign-changing solutions (see (iii) of Lemma \ref{L:bdd} and~\cite[Lemma 4.1]{AO1} for details). In case $2 < m < 3$, integrating both sides of \eqref{ei1} over $(0,t)$, we find that
\begin{align*}
\lefteqn{
\frac1{(m-2)(3-m)} \int_\Omega u_0^{3-m} \rho^2\, \d x + \frac{\lambda m}2 \int^t_0 \|\nabla u_\vep(\tau)\|_{L^2(\omega)}^2 \, \d \tau
}\\
&\leq \frac1{(m-2)(3-m)} \int_\Omega u_\vep(\cdot,t)^{3-m} \rho^2\, \d x + C_\omega\\
&\leq \frac{|\Omega|^{(m-1)/2}}{(m-2)(3-m)} \left( \int_\Omega u_\vep(\cdot,t)^2\, \d x \right)^{(3-m)/2} + C_\omega.
\end{align*}
Hence without imposing any additional assumptions, we can obtain
\begin{equation}\label{ee1}
\int^T_0 \|\nabla u_\vep(\tau)\|_{L^2(\omega)}^2 \, \d \tau \leq C_\omega
\end{equation}
for any $\omega \Subset \Omega$.

Now, let us discuss how to justify the formal argument so far. For the case where $u_\vep(\cdot,t)$ lies on $H^1_0(\Omega) \cap L^\infty(\Omega)$ for a.e.~$t \in (0,T)$, we test \eqref{eq:1.1} by
$$
\sigma_{\delta}(u_\vep) \rho^2 := (2-m)^{-1} \left( u_\vep + \delta \right)^{2-m} \rho^2
$$
for any $\delta \in (0,1)$ (in what follows, $C_\omega$ will also be independent of $\delta$). Since $u_\vep \geq 0$, we can assure that $\sigma_\delta(u_\vep) \in H^1_0(\Omega)$ for any $\delta > 0$. Then we find that
\begin{align*}
\left\langle \partial_t u_\vep, \sigma_{\delta}(u_\vep) \rho^2 \right\rangle_{H^1_0(\Omega)}
&= \frac{\d}{\d t} \left( \int_\Omega \hat \sigma_{\delta} (u_\vep) \rho^2 \, \d x\right)\\
&= \frac 1 {(2-m)(3-m)} \frac{\d}{\d t} \left( \int_\Omega \left( u_\vep +\delta \right)^{3-m} \rho^2 \, \d x \right),
\end{align*}
where $\hat \sigma_\delta$ denotes a primitive function of $\sigma_\delta$, and
\begin{align*}
\lefteqn{
 \int_\Omega a_\vep \nabla u_\vep^m \cdot \nabla \sigma_\delta(u_\vep) \rho^2 \, \d x
}\\
&= m \int_\Omega u_\vep^{m-1} (u_\vep + \delta)^{1-m} a_\vep \nabla u_\vep \cdot (\nabla u_\vep) \rho^2 \, \d x\\
&\quad + \frac{2m}{2-m} \int_\Omega u_\vep^{m-1} (u_\vep + \delta)^{2-m} a_\vep \nabla u_\vep \cdot (\nabla \rho) \rho \, \d x\\
&\geq \lambda m \int_\Omega \left(\frac{u_\vep}{u_\vep + \delta}\right)^{m-1} |\nabla u_\vep|^2 \rho^2 \, \d x\\
&\quad - \frac{2mM_\rho}{m-2} \int_\Omega u_\vep \left(\frac{u_\vep}{u_\vep + \delta}\right)^{m-2} |\nabla u_\vep| \rho \, \d x\\
&\geq \frac{\lambda m}2 \int_\Omega \left(\frac{u_\vep}{u_\vep + \delta}\right)^{m-1} |\nabla u_\vep|^2 \rho^2 \, \d x - C_\omega \int_\Omega u_\vep^{m-1}(u_\vep + \delta)^{3-m} \, \d x.
\end{align*}
Here the last term of the right-hand side turns out to be uniformly bounded for $\vep \in (0,1)$ with the use of Lemma \ref{L:bdd}. Hence the integration in time implies
\begin{align*}
\frac1{(m-2)(3-m)} \int_\Omega u_0^{3-m} \rho^2\, \d x + \frac{\lambda m}2 \int^t_0 \int_\Omega \left(\frac{u_\vep}{u_\vep + \delta}\right)^{m-1} |\nabla u_\vep|^2 \rho^2 \, \d x \d \tau
\\
\leq \frac{|\Omega|^{(m-1)/2}}{(m-2)(3-m)} \left( \int_\Omega \left[u_\vep(\cdot,t)+\delta\right]^2\, \d x \right)^{(3-m)/2} + C_\omega.
\end{align*}
Letting $\delta \to 0_+$ and applying Lebesgue's dominated convergence theorem, one can obtain the same conclusion. For general cases, we first consider approximate \emph{classical} solutions $u_\vep^{(n)} = u_\vep^{(n)}(x,t) \geq 1/n$ (hence, in particular, it lies on $H^1_0(\Omega) \cap L^\infty(\Omega)$) for \eqref{eq:1.1}--\eqref{eq:1.3} with smooth bounded initial data $u_0^{(n)} \geq 1/n$ satisfying $u_0^{(n)} = 1/n$ on $\partial \Omega$ and the inhomogeneous Dirichlet condition $u_\vep^{(n)} = 1/n$ on $\partial \Omega \times (0,T)$ as in~\cite[\S 5.4]{Vazquez} and derive the uniform estimates for $u_\vep^{(n)}$ with $\delta > 0$ obtained above. Then passing to the limit as $n \to +\infty$ first and then as $\delta \to 0_+$, we can obtain the desired estimates for the limit $u_\vep$ of $(u_\vep^{(n)})$, which is the unique nonnegative weak solution to \eqref{eq:1.1}--\eqref{eq:1.3}. We shall exhibit only formal derivations for the other cases below; however, one can similarly justify them with slight modifications, which remain for the reader.

In case $m > 3$, assuming that $u_0^{3-m} \in L^1_{\rm loc}(\Omega)$, which also entails the positivity of $u_0$ over $\Omega$ (but $u_0$ may still vanish on $\partial \Omega$), we have
\begin{align*}
\frac1{(m-2)(m-3)} \int_\Omega u_\vep(\cdot,t)^{3-m} \rho^2\, \d x + \frac{\lambda m}2 \int^t_0 \|\nabla u_\vep(\tau)\|_{L^2(\omega)}^2 \, \d \tau
\\
\leq \frac1{(m-2)(m-3)} \int_\Omega u_0^{3-m} \rho^2\, \d x + C_\omega.
\end{align*}
Thus we obtain
\begin{align*}
\sup_{t \in [0,T]} \left( \int_\Omega u_\vep(\cdot,t)^{3-m} \rho^2\, \d x \right) + \int^T_0 \|\nabla u_\vep(\tau)\|_{L^2(\omega)}^2 \, \d \tau
\\
\leq C \int_\Omega u_0^{3-m} \rho^2\, \d x + C_\omega
\end{align*}
for any $\omega \Subset \Omega$.

We next handle the exceptional cases, $m=2,3$. In case $m=2$, test \eqref{eq:1.1} by $\rho^2 \log u_\vep$. Then it follows that
\begin{equation}\label{ei2}
\frac{\d}{\d t} \left( \int_\Omega [u_\vep \log u_\vep - u_\vep] \rho^2 \, \d x \right) + \lambda \|\nabla u_\vep\|_{L^2(\omega)}^2 \leq C_\omega.
\end{equation}
Here we used the fact that
\begin{align*}
\lefteqn{
 \int_\Omega a_\vep \nabla u_\vep^2 \cdot \nabla \left(\rho^2 \log u_\vep\right) \, \d x
}\\
&= 2 \int_\Omega a_\vep \nabla u_\vep \cdot (\nabla u_\vep) \rho^2 \, \d x
+ 4 \int_\Omega a_\vep \nabla u_\vep \cdot (\nabla \rho) u_\vep (\log u_\vep) \rho \, \d x\\
&\geq 2\lambda \int_\Omega |\nabla u_\vep|^2 \rho^2 \, \d x - 4 M_\rho \int_\Omega |\nabla u_\vep| \rho |u_\vep \log u_\vep| \, \d x\\
&\geq \lambda \|\nabla u_\vep\|_{L^2(\omega)}^2 - C_\omega \int_\Omega |u_\vep \log u_\vep |^2 \, \d x \geq \lambda \|\nabla u_\vep\|_{L^2(\omega)}^2 - C_\omega.
\end{align*}
Here the last inequality follows from (i) of Lemma \ref{L:bdd}. Integrating \eqref{ei2} over $(0,t)$, we see that
\begin{align*}
\int_\Omega [u_\vep(\cdot,t) \log u_\vep(\cdot,t) - u_\vep(\cdot,t)] \rho^2 \, \d x + \lambda \int^t_0 \|\nabla u_\vep(\tau)\|_{L^2(\omega)}^2 \, \d \tau\\
\leq \int_\Omega [u_0 \log u_0 - u_0] \rho^2 \, \d x + C_\omega,
\end{align*}
which implies \eqref{ee1} for any $\omega \Subset \Omega$, since $r \log r - r$ is bounded from below for $r \geq 0$ and $u_0 \log u_0 \in L^1_{\rm loc}(\Omega)$ under \eqref{H1}.

In case $m = 3$, test \eqref{eq:1.1} by $- u_\vep^{-1} \rho^2$ to see that
\begin{align*}
 \dfrac{\d}{\d t} \left(\int_\Omega [-\log u_\vep]\rho^2 \, \d x\right)
 + \int_\Omega a_\vep \nabla u_\vep^3 \cdot \nabla (- u_\vep^{-1} \rho^2) \, \d x =0.
\end{align*}
Note that
\begin{align*}
\lefteqn{
\int_\Omega a_\vep \nabla u_\vep^3 \cdot \nabla (- u_\vep^{-1} \rho^2) \, \d x
}\\
&= 3 \int_\Omega a_\vep \nabla u_\vep \cdot (\nabla u_\vep) \rho^2 \, \d x - 6 \int_\Omega u_\vep a_\vep \nabla u_\vep \cdot (\nabla \rho) \rho \, \d x\\
&\geq 3 \lambda \int_\Omega |\nabla u_\vep|^2 \rho^2 \, \d x - 6 M_\rho \int_\Omega |\nabla u_\vep| \rho |u_\vep| \, \d x\\
&\geq \lambda \int_\Omega |\nabla u_\vep|^2 \rho^2 \, \d x - C_\omega \int_\Omega |u_\vep|^2 \, \d x\\
&\geq \lambda \|\nabla u_\vep\|_{L^2(\omega)}^2 - C_\omega.
\end{align*}
Here (i) of Lemma \ref{L:bdd} was used again to derive the last inequality. Thus the integration in time over $(0,t)$ yields
\begin{align*}
\lefteqn{
 \int_{[u_\vep(\cdot,t)\leq 1]} [-\log u_\vep(\cdot,t)] \rho^2 \, \d x
+ \int_{[u_0\geq 1]} (\log u_0) \rho^2 \, \d x + \lambda \int^t_0 \|\nabla u_\vep(\tau)\|_{L^2(\omega)}^2 \, \d \tau
}\\
&\leq \int_{[u_0\leq 1]} [-\log u_0] \rho^2 \, \d x + \int_{[u_\vep(\cdot,t)\geq 1]} [\log u_\vep(\cdot,t)]\rho^2 \, \d x + C_\omega,
\end{align*}
whence follows from (i) of Lemma \ref{L:bdd} that
$$
\int_{[u_\vep(\cdot,t)\leq 1]} [-\log u_\vep(\cdot,t)] \rho^2 \, \d x + \lambda \int^t_0 \|\nabla u_\vep(\tau)\|_{L^2(\omega)}^2 \, \d \tau \leq C_\omega,
$$
provided that $\log u_0 \in L^1_{\rm loc}(\Omega)$. Hence we have the following:

\begin{lemma}[Local uniform estimates for $\nabla u_\vep$ for $m \geq 2$]\label{L:lue}
Under the same assumptions as in Theorem \ref{T:AO1}, for any $\omega \Subset \Omega$, there exists a constant $C_\omega \geq 0$ such that the following holds true\/{\rm :}
\begin{enumerate}
 \item[(i)] In case $2 \leq m < 3$,
$$
\int^T_0 \|\nabla u_\vep(t)\|_{L^2(\omega)}^2 \, \d t \leq C_\omega.
$$
 \item[(ii)] In case $m = 3$,
$$
\sup_{t \in [0,T]} \left( \int_{[u_\vep(\cdot,t)\leq 1] \cap \omega} [-\log u_\vep(\cdot,t)] \, \d x \right) + \int^T_0 \|\nabla u_\vep(t)\|_{L^2(\omega)}^2 \, \d t \leq C_\omega,
$$
provided that $\log u_0 \in L^1_{\rm loc}(\Omega)$.
\item[(iii)] In case $m > 3$,
$$
\sup_{t \in [0,T]} \left( \int_\omega u_\vep(\cdot,t)^{3-m}\, \d x \right) + \int^T_0 \|\nabla u_\vep(t)\|_{L^2(\omega)}^2 \, \d t \leq C_\omega,
$$
provided that $u_0^{3-m} \in L^1_{\rm loc}(\Omega)$.
\end{enumerate}
Moreover, there exists a {\rm (}not relabeled{\rm )} subsequence of $(\vep_n)$ such that the distributional gradient $\nabla u$ lies on $[L^2_{\rm loc}(\Omega \times [0,T])]^N$ and
\begin{align*}
\nabla u_{\vep_n} \to \nabla u \quad \mbox{ weakly in } L^2(0,T;L^2(\omega))
\quad \mbox{ as } \ \vep_n \to 0_+
\end{align*}
for any $\omega \Subset \Omega$.
\end{lemma}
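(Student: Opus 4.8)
The plan is to combine the localized energy estimates sketched above with a routine weak-compactness argument, identifying the weak limit of the localized gradients with the distributional gradient of the homogenized limit $u$. For the bounds (i)--(iii) themselves there is nothing conceptually new: testing \eqref{eq:1.1} by $(2-m)^{-1}(u_\vep+\delta)^{2-m}\rho^2$ and letting $\delta\to 0_+$ gives the case $2<m<3$ of (i) and the estimate (iii) when $m>3$; testing by $\rho^2\log u_\vep$ gives the case $m=2$ of (i); and testing by $-u_\vep^{-1}\rho^2$ gives (ii). In each case, after using \eqref{H3} and Young's inequality, the $x$-integrals on the right-hand side are controlled \emph{uniformly} in $\vep\in(0,1)$ by Lemma~\ref{L:bdd}(i), and the initial-data terms are finite: $\int_\Omega u_0^{3-m}\rho^2\,\d x$ by \eqref{hyp-u0} when $m>3$, $\int_\Omega(-\log u_0)\rho^2\,\d x$ (over $[u_0\le 1]$) by \eqref{hyp-u0} when $m=3$, $\int_\Omega(u_0\log u_0-u_0)\rho^2\,\d x$ by \eqref{H1} when $m=2$, and no condition on $u_0$ is needed when $2<m<3$. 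To make the singular tests rigorous I would run them, as indicated in the text, at the level of the approximating \emph{classical} solutions $u_\vep^{(n)}\ge 1/n$ of the corresponding inhomogeneous Cauchy--Dirichlet problems, for which $(u_\vep^{(n)}+\delta)^{2-m}$, $\log u_\vep^{(n)}$ and $(u_\vep^{(n)})^{-1}$ are genuine admissible test functions; one derives the estimates with constants independent of $n$ and $\delta$, passes to the limit $n\to+\infty$ (so $u_\vep^{(n)}\to u_\vep$) and then $\delta\to 0_+$, transferring the gradient bound to the distributional gradient of $u_\vep$ by weak lower semicontinuity of the $L^2$-norm and handling the remaining terms by Lebesgue's dominated convergence.

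With (i)--(iii) available, I would fix an exhaustion $\omega_1\Subset\omega_2\Subset\cdots$ of $\Omega$ with $\bigcup_j\omega_j=\Omega$. The estimates show that $(\nabla u_{\vep_n})$ is bounded in $[L^2(\omega_j\times(0,T))]^N$ for each fixed $j$, so a diagonal extraction yields a single (not relabeled) subsequence of $(\vep_n)$ and a field $\chi\in[L^2_{\rm loc}(\Omega\times[0,T])]^N$ with $\nabla u_{\vep_n}\rightharpoonup\chi$ weakly in $[L^2(\omega_j\times(0,T))]^N$ for every $j$, hence weakly in $L^2(0,T;L^2(\omega))$ for every $\omega\Subset\Omega$. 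To identify $\chi$, I take any $\Phi\in[C^\infty_{\rm c}(\Omega\times(0,T))]^N$, note that $\mathrm{supp}\,\Phi\subset\omega_j\times(0,T)$ for $j$ large, and compute
\begin{align*}
\int_0^T\int_\Omega \chi\cdot\Phi\,\d x\,\d t
&=\lim_{\vep_n\to 0_+}\int_0^T\int_\Omega \nabla u_{\vep_n}\cdot\Phi\,\d x\,\d t
=-\lim_{\vep_n\to 0_+}\int_0^T\int_\Omega u_{\vep_n}\,\dv\Phi\,\d x\,\d t\\
&=-\int_0^T\int_\Omega u\,\dv\Phi\,\d x\,\d t,
\end{align*}
where the middle equality is the definition of the distributional gradient of $u_{\vep_n}$ and the last uses the strong convergence $u_{\vep_n}\to u$ in $L^1(0,T;L^{m+1}(\Omega))$, a particular case of \eqref{eq:Thm1.1-2}. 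Hence $\chi=\nabla u$ in the sense of distributions, which gives $\nabla u\in[L^2_{\rm loc}(\Omega\times[0,T])]^N$ and the asserted weak convergence.

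The step I expect to be genuinely delicate is the rigorous justification of the singular tests: this is precisely where the restriction to nonnegative data and hypothesis \eqref{hyp-u0} are essential, and one must verify that the right-hand side terms — e.g. $\int_\Omega u_\vep^{m-1}(u_\vep+\delta)^{3-m}\,\d x$ in the cases $m\neq 2,3$ and $\int_\Omega|u_\vep\log u_\vep|^2\,\d x$ when $m=2$ — stay bounded \emph{uniformly in $\vep$, $n$ and $\delta$}. This is secured by combining the elementary bounds $0\le u_\vep/(u_\vep+\delta)\le 1$ and the lower boundedness of $r\mapsto r\log r-r$ on $[0,\infty)$ with Lemma~\ref{L:bdd}(i), i.e.\ boundedness of $(u_\vep)$ in $L^\infty(0,T;L^{m+1}(\Omega))$ (hence in $L^2(0,T;L^2(\Omega))$ since $\Omega$ is bounded and $m+1>2$) and of $(u_\vep^m)$ in $L^2(0,T;H^1_0(\Omega))$. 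By contrast, the diagonal compactness and the limit-identification steps are routine, and the convergence statement then follows immediately from them together with \eqref{eq:Thm1.1-2}.
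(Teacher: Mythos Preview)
Your proposal is correct and follows essentially the same route as the paper: the estimates (i)--(iii) are obtained by the same singular test functions (with the same $\delta$-regularization and approximation by classical solutions $u_\vep^{(n)}\ge 1/n$), and the convergence assertion is proved by the same diagonal extraction over an exhaustion $\omega_m\Subset\Omega$ together with identification of the weak limit as the distributional gradient via \eqref{eq:Thm1.1-2}. Your write-up is in fact slightly more explicit than the paper's on the identification step (the paper merely states that $\xi_\omega$ coincides with $\nabla u$ over $\omega$), but the argument is the same.
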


\begin{proof}
It remains only to show the last assertion on the weak convergence of $\nabla u_{\vep_n}$ as $\vep_n \to 0_+$. In each case of Lemma \ref{L:lue}, for each $\omega \Subset \Omega$, from the local uniform estimate for $\nabla u_\vep$, one can take a subsequence $\vep_n \to 0_+$ and a limit $\xi_\omega \in [L^2(0,T;L^2(\omega))]^N$ such that
\begin{equation}\label{c:nabla_u:omega}
\nabla u_{\vep_n} \to \xi_\omega \quad \mbox{ weakly in } [L^2(0,T;L^2(\omega))]^N,
\end{equation}
and then, $\xi_\omega(\cdot,t)$ coincides with the distributional gradient $\nabla u(\cdot,t)$ of $u(\cdot,t)$ over $\omega$. Hence we can simply write $\nabla u$ instead of $\xi_\omega$. Set $\omega_m := \{x \in \Omega \colon \mathrm{dist}(x,\partial) > 1/m\}$ for $m \in \N$, where $\mathrm{dist}(x,\partial)$ stands for the distance between $x$ and the boundary $\partial \Omega$. Then for any $\omega \Subset \Omega$ one can take $m \in \N$ large enough such that $\omega \subset \omega_m$. Hence, due to a diagonal argument, one can take a (not relabeled) subsequence such that \eqref{c:nabla_u:omega} holds for any $\omega \Subset \Omega$; moreover, $\nabla u$ turns out to be Lebesgue measurable in $\Omega \times (0,T)$. Therefore $\nabla u$ belongs to $[L^2_{\rm loc}(\Omega \times [0,T])]^N$.
\end{proof}

We close this section with the following:

\begin{remark}[Optimal integrability of the gradient $\nabla u_\vep$]\label{R:opt}
In Lemmas \ref{L:bdd} and \ref{L:lue}, we have established global and local (in space) uniform estimates in $L^2$ for $\nabla u_\vep$ when $m < 2$ and $m \geq 2$, respectively. This remark is devoted to discussing the necessity of assumptions in those lemmas in view of optimal integrability for the gradient $\nabla u_\vep$. Let us first focus on a difference between $m < 2$ and $m \geq 2$. For simplicity, let $a(y,s)$ be just an $N \times N$ unit matrix and let $\Omega$ be the unit ball $B_1(0) \subset \R^N$ centered at the origin. Then \eqref{eq:1.1}--\eqref{eq:1.3} admits a separable solution, that is, $u(x,t) = \rho(t) \phi(x)$, where $\rho(t) > 0$ and $\phi(x) > 0$ solve
$$
\partial_t \rho = - \lambda \rho^m, \quad \rho(0)=1, \quad - \Delta \phi^m = \lambda \phi \ \mbox{ in } B_1(0), \quad \phi = 0 \ \mbox{ on } \partial B_1(0)
$$ 
for any $\lambda > 0$. Then $\phi$ is radially symmetric, that is, $\phi(x) = \tilde\phi(r)$ for $r = |x|$. Moreover, thanks to Hopf's lemma, it follows that $\partial_r (\tilde\phi^m)(1) < 0$, and hence, one may write 
$$
|\partial_r \tilde\phi(r)| = \frac1m \tilde \phi(r)^{1-m} |\partial_r (\tilde\phi^m)(r)| \asymp (1-r)^{(1-m)/m}
$$ 
near the boundary $r = 1$, that, is, $|\nabla \phi(x)| = |\partial_r {\tilde\phi}(r)| \asymp (1-r)^{(1-m)/m}$ for $r$ close to $1$ (here, $f \asymp g$ means $c f(\cdot) \leq g(\cdot) \leq c^{-1}f(\cdot)$ for some constant $c > 0$). Therefore observing that
$$
\int^1_{1-\vep} (1-r)^{2(1-m)/m} \, \d r < +\infty \quad \mbox{ if and only if } \quad m < 2
$$
for any $\vep \in (0,1)$, one can expect the global regularity $\nabla u_\vep(\cdot,t) \in L^2(\Omega)$ for $m < 2$ only, even though $\nabla u_\vep^m(\cdot,t)$ always lies on $L^2(\Omega)$.

We next discuss the necessity of the positivity of initial data in $\Omega$ to derive local $L^2$ estimates for $\nabla u_\vep(\cdot,t)$ for $m \geq 3$. To this end, recall the so-called \emph{Barenblatt solution} (or \emph{Zel'dovich-Kompaneets-Barenblatt solution}),
$$
\mathcal{B}(x,t) = t^{-\alpha} \left[ C - \kappa (t^{-\alpha/N} |x|)^2 \right]_+^{1/(m-1)} \ \mbox{ for } \ x \in \mathbb R^N, \ t > 0,
$$
where $\alpha := \frac{N}{N(m-1)+2}$ and $\kappa := \frac{\alpha(m-1)}{2Nm} > 0$, for any $C > 0$. For simplicity, set $C = \kappa$. We remark that $\mathcal{B}(x,t)$ solves \eqref{eq:1.1}--\eqref{eq:1.3}, whenever $t < 1$, for which the support of $\mathcal{B}(\cdot,t)$ is still inside $\Omega = B_1(0)$. Let us calculate the $L^2$ norm of the gradient $\nabla \mathcal{B}(\cdot,t)$ for $t \in (0,1)$. Setting $\sigma = (t^{-\alpha/N}r)^2$, we find that
\begin{align*}
\lefteqn{
\int_{B_1(0)} |\nabla \mathcal{B}(x,t)|^2 \, \d x
}\\
&= |B_1(0)| \frac{4\kappa^{2/(m-1)}}{(m-1)^2} t^{-2\alpha(1+2/N)} \int^{t^{\alpha/N}}_0 \left[ 1 - (t^{-\alpha/N}r)^2 \right]^{2/(m-1)-2} r^{N-1} \, \d r\\
&= |B_1(0)| \frac{2\kappa^{2/(m-1)}}{(m-1)^2} t^{-\alpha(1+4/N)} \int^1_0 (1 - \sigma)^{2/(m-1)-2} \sigma^{(N-2)/2} \, \d \sigma,
\end{align*}
where $|B_1(0)|$ stands for the volume of the unit ball $B_1(0)$. Thus we observe that
$$
\nabla \mathcal{B}(\cdot,t) \in L^2_{\rm loc}(B_1(0)) \quad \mbox{ if and only if } \quad m < 3.
$$
Therefore even local regularity $\nabla u_\vep(\cdot,t) \in L^2_{\rm loc}(\Omega)$ cannot be expected for $m \geq 3$ when initial data vanish in $\Omega$ similarly to Barenblatt solutions whose supports are included in domains.

Finally, we remark that even local uniform estimates obtained above cannot be extended to sign-changing weak solutions for \eqref{eq:1.1}--\eqref{eq:1.3} for $m \geq 2$ (cf.~\cite{AO1}). Indeed, a sign-changing explicit solution called a \emph{dipole solution} defined over the interval $[-1,1]$ has a compact support expanding at a finite speed and a fixed zero at the origin, and moreover, the local behavior of the dipole solution near the origin is similar to $|x|^{1/m}\mathrm{sign}(x)$ (see~\cite[p.69]{Vazquez}). Moreover, we observe that
$$
\int^\vep_0 |(x^{1/m})'|^2 \, \d x < +\infty \quad \mbox{ if and only if } \quad m < 2
$$
for any $\vep \in (0,1)$. Indeed, calculating the $L^2$ norm of the gradient of the dipole solution over any open interval $I \Subset (-1,1)$ involving the origin, one can find that it is finite only when $m < 2$. Therefore the nonnegativity of initial data, which is assumed throughout the present paper, is necessary for $m \geq 2$ to derive local uniform estimates in $L^2$ for $\nabla u_\vep(\cdot,t)$ as in Lemma \ref{L:lue}.
\end{remark}

\section{Proof of Theorem \ref{T:main}}\label{S:prf}

We first recall the following:
\begin{lemma}[\cite{AO1}]\label{L:key}
Under the same assumptions as in Theorem \ref{T:main}, it holds that
\begin{align*}
  \lefteqn{\lim_{\vep_n\to 0_+}\vep_n^{2-r}\int_0^T\int_{\Omega}\frac{u_{\vep_n}(x,t)-u(x,t)}{\vep_n}\phi(x)b(\tfrac{x}{\vep_n})\psi(t)\partial_s c(\tfrac{t}{\vep_n^r})\, \d x\d t}\nonumber\\
&= \vierint a(y,s) \left[ \nabla u^m(x,t) + \nabla_y z(x,t,y,s) \right] \cdot \phi(x) \nabla_yb(y)\psi(t)c(s)\, \d Z\nonumber
\end{align*}
for any $\phi\in C^{\infty}_{\rm c}(\Omega)$, $b\in C^{\infty}_{\mathrm{per}}(\square)$, $\psi \in C^{\infty}_{\rm c}(0,T)$ and $c\in C^{\infty}_{\mathrm{per}}([0,1])$.
\end{lemma}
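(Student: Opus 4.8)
The plan is to test the weak formulations of the original equation~\eqref{eq:1.1} and of the homogenized equation~\eqref{eq:homeq} against the \emph{same} oscillating test function
\[
\varphi_\vep(x,t):=\phi(x)\,b(\tfrac x\vep)\,\psi(t)\,c(\tfrac t{\vep^r}),
\]
which lies in $L^2(0,T;H^1_0(\Omega))$ and vanishes at $t=0,T$ since $\psi\in C^\infty_{\rm c}(0,T)$, so no boundary term arises when integrating the time-derivative term by parts. Using
\[
\partial_t\varphi_\vep=\phi\,b(\tfrac x\vep)\Big[\psi'\,c(\tfrac t{\vep^r})+\vep^{-r}\psi\,(\partial_s c)(\tfrac t{\vep^r})\Big],
\]
the weak form of~\eqref{eq:1.1} becomes, after isolating the term carrying $\vep^{-r}(\partial_s c)(\tfrac t{\vep^r})$,
\begin{align*}
\vep^{-r}\int_0^T\int_\Omega u_\vep\,\phi\,b(\tfrac x\vep)\,\psi\,(\partial_s c)(\tfrac t{\vep^r})\,\d x\,\d t
&=-\int_0^T\int_\Omega u_\vep\,\phi\,b(\tfrac x\vep)\,\psi'\,c(\tfrac t{\vep^r})\,\d x\,\d t\\
&\quad+\int_0^T\int_\Omega a_\vep\nabla u_\vep^m\cdot\nabla\varphi_\vep\,\d x\,\d t,
\end{align*}
and~\eqref{eq:homeq} yields the identical relation with $u_\vep$ replaced by $u$ and $a_\vep\nabla u_\vep^m$ replaced by $j_{\rm hom}$. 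Subtracting the two relations and multiplying by $\vep$ turns the left-hand side into exactly $\vep^{2-r}\int_0^T\int_\Omega\frac{u_\vep-u}{\vep}\,\phi\,b(\tfrac x\vep)\,\psi\,(\partial_s c)(\tfrac t{\vep^r})\,\d x\,\d t$, so the whole proof reduces to passing to the limit along $(\vep_n)$ in
\begin{align*}
&-\vep\int_0^T\int_\Omega(u_\vep-u)\,\phi\,b(\tfrac x\vep)\,\psi'\,c(\tfrac t{\vep^r})\,\d x\,\d t\\
&\quad+\vep\int_0^T\int_\Omega(a_\vep\nabla u_\vep^m-j_{\rm hom})\cdot\nabla\varphi_\vep\,\d x\,\d t.
\end{align*}

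For the first integral, $(u_{\vep_n})$ is bounded in $L^\infty(0,T;L^{m+1}(\Omega))\subset L^1(\Omega\times(0,T))$ by Lemma~\ref{L:bdd}(i) and $u$ is fixed, so the integral stays bounded and the factor $\vep_n$ sends it to $0$. For the second, I would expand $\vep_n\nabla\varphi_{\vep_n}=\psi\,c(\tfrac t{\vep_n^r})\big[\vep_n(\nabla\phi)\,b(\tfrac x{\vep_n})+\phi\,(\nabla_y b)(\tfrac x{\vep_n})\big]$. The part carrying the explicit factor $\vep_n$ vanishes in the limit because $a_{\vep_n}\nabla u_{\vep_n}^m$ is bounded in $[L^2(\Omega\times(0,T))]^N$ (Lemma~\ref{L:bdd}(i) together with~\eqref{H3}) and $j_{\rm hom}\in[L^2(\Omega\times(0,T))]^N$, while the remaining part splits into
\begin{align*}
&\int_0^T\int_\Omega a_{\vep_n}\nabla u_{\vep_n}^m\cdot\phi\,(\nabla_y b)(\tfrac x{\vep_n})\,\psi\,c(\tfrac t{\vep_n^r})\,\d x\,\d t\\
&\quad-\int_0^T\int_\Omega j_{\rm hom}\cdot\phi\,(\nabla_y b)(\tfrac x{\vep_n})\,\psi\,c(\tfrac t{\vep_n^r})\,\d x\,\d t.
\end{align*}
The first of these converges to $\vierint a(y,s)\big[\nabla u^m(x,t)+\nabla_y z(x,t,y,s)\big]\cdot\phi(x)\,\nabla_y b(y)\,\psi(t)\,c(s)\,\d Z$ by the weak two-scale convergence~\eqref{eq:Thm1.1-3}, since for each coordinate $j$ the function $\phi(x)(\partial_{y_j}b)(\tfrac x{\vep_n})\psi(t)c(\tfrac t{\vep_n^r})$ is an admissible test function in the sense of the definition of $\wtts$ (note $\partial_{y_j}b\in C^\infty_{\rm per}(\square)$). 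The second converges to $0$, because $j_{\rm hom}\,\phi\,\psi$ is a \emph{fixed} element of $[L^2(\Omega\times(0,T))]^N$, whereas $(\nabla_y b)(\tfrac x{\vep_n})\,c(\tfrac t{\vep_n^r})$ is bounded in $L^\infty$ and converges weakly to $\langle\nabla_y b\rangle_y\,\langle c\rangle_s=0$ in $[L^2(\Omega\times(0,T))]^N$, the $y$-mean vanishing by periodicity of $b$. Collecting these limits gives the asserted identity.

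The step I expect to be the genuine obstacle is the handling of the $u$-contribution that is subtracted off, namely of $\vep_n^{1-r}\int_0^T\int_\Omega u\,\phi\,b(\tfrac x{\vep_n})\,\psi\,(\partial_s c)(\tfrac t{\vep_n^r})\,\d x\,\d t$. Estimating it directly — writing $(\partial_s c)(\tfrac t{\vep_n^r})=\vep_n^r\,\partial_t[c(\tfrac t{\vep_n^r})]$, integrating by parts in $t$, decomposing $b=\langle b\rangle_y+\vep_n\,\dv_x(\text{periodic field})$ through a periodic primitive, integrating by parts in $x$, and using the local bound $\nabla u\in L^2(0,T;L^2(\omega))$ from Lemma~\ref{L:lue} — seems to give only an $O(\vep_n^{2-r})$ control, which fails to vanish once $r\ge 2$. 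The device that circumvents this, and the reason for also testing~\eqref{eq:homeq} above, is to keep $u$ inside the weak formulation rather than estimate it by hand: this replaces the delicate term by $\vep_n\int_0^T\int_\Omega j_{\rm hom}\cdot\nabla\varphi_{\vep_n}\,\d x\,\d t$, whose only non-trivial part $\int_0^T\int_\Omega j_{\rm hom}\cdot\phi\,(\nabla_y b)(\tfrac x{\vep_n})\psi\,c(\tfrac t{\vep_n^r})\,\d x\,\d t$ vanishes in the limit for \emph{every} $r>0$, simply because $j_{\rm hom}$ is a fixed $L^2$ function paired against a sequence going weakly to zero. The remaining points — admissibility of $\varphi_\vep$ as a test function and the legitimacy of the time integrations by parts, which rest on $u_\vep,u\in W^{1,2}(0,T;H^{-1}(\Omega))$ and the smoothness and compact time-support of $\varphi_\vep$ — are routine.
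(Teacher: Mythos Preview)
Your proof is correct and follows essentially the same approach as the paper's: subtract the weak forms of \eqref{eq:1.1} and \eqref{eq:homeq}, test against the oscillating function $\vep_n\phi(x)b(\tfrac x{\vep_n})\psi(t)c(\tfrac t{\vep_n^r})$ (equivalently, test against $\varphi_{\vep_n}$ and multiply by $\vep_n$, as you do), let the terms with an explicit $\vep_n$ factor vanish by boundedness, pass to the limit in the flux term via~\eqref{eq:Thm1.1-3}, and kill the $j_{\rm hom}$ term using $\langle\nabla_y b\rangle_y=0$ (the mean-value property). Your closing remark on why subtracting the homogenized equation is the right device---replacing a direct estimate on the $u$-contribution by the weak convergence of $(\nabla_y b)(\tfrac x{\vep_n})c(\tfrac t{\vep_n^r})$ to zero---is exactly the point, and the paper proceeds in the same way.
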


This lemma has already been proved in~\cite[Lemma 5.3]{AO1}; however, for the completeness, we briefly give a proof.
\begin{proof}
Subtracting the weak form of \eqref{eq:homeq} from that of \eqref{eq:1.1} and testing both sides by $\Psi_{\vep_n}(x,t):=\vep_n\phi(x) b(\tfrac{x}{\vep_n})\psi(t)c(\tfrac{t}{\vep_n^r})$, we observe that
\begin{align*}
0&= -\int_0^T\int_{\Omega}\left[u_{\vep_n}(x,t)-u(x,t)\right]\partial_t\Psi_{\vep_n}(x,t)\, \d x\d t\\
&\quad +\int_0^T\int_{\Omega}\left[ a(\tfrac{x}{\vep_n},\tfrac{t}{\vep_n^{r}})\nabla u_{\vep_n}^m(x,t) -j_{\rm hom}(x,t)\right] \cdot
\nabla \Psi_{\vep_n}(x,t)\, \d x\d t\nonumber\\
&=
-\vep_n\int_0^T\int_{\Omega}\bigl[u_{\vep_n}(x,t)-u(x,t)\bigl]\phi(x)b(\tfrac{x}{\vep_n})\partial_t\psi(t)c(\tfrac{t}{\vep_n^r})\, \d x\d t\nonumber\\
&\quad -\vep_n^{2-r}\int_0^T\int_{\Omega}\frac{u_{\vep_n}(x,t)-u(x,t)}{\vep_n}\phi(x)b(\tfrac{x}{\vep_n})\psi(t)\partial_s c(\tfrac{t}{\vep_n^r}) \, \d x\d t\nonumber\\
&\quad +\vep_n\int_0^T\int_{\Omega}\left[ a(\tfrac{x}{\vep_n},\tfrac{t}{\vep_n^{r}})\nabla u_{\vep_n}^m(x,t) -j_{\rm hom}(x,t)\right] \\
&\qquad \cdot \, \nabla\phi(x)b(\tfrac{x}{\vep_n})\psi(t)c(\tfrac{t}{\vep_n^r})\, \d   x\d t\nonumber\\
&\quad +\int_0^T\int_{\Omega}a(\tfrac{x}{\vep_n},\tfrac{t}{\vep_n^{r}})\nabla u_{\vep_n}^m(x,t) \cdot \phi(x)\nabla_yb(\tfrac{x}{\vep_n})\psi(t)c(\tfrac{t}{\vep_n^r})\, \d x\d t\nonumber\\
&\quad -\int_0^T\int_{\Omega}j_{\rm hom}(x,t)\cdot \phi(x)\nabla_yb(\tfrac{x}{\vep_n})\psi(t)c(\tfrac{t}{\vep_n^r})\, \d x\d t. \nonumber
\end{align*}
Thanks to Theorem \ref{T:AO1}, the first and third terms vanish as $\vep_n\to 0_+$. Furthermore, the mean value property (see \cite[Theorem 2.6]{CD}) and the periodicity of $b\in C^{\infty}_{\rm per}(\square)$ yield that the fifth term also vanishes as $\vep_n\to 0_+$. Making use of \eqref{eq:Thm1.1-3}, we get the assertion.
\end{proof}


In order to prove Theorem \ref{T:main} for $r\ge 2$, based on Lemma \ref{L:lue}, we need the following:

\begin{lemma}[Relation between correctors]\label{L:zw}
Under the same assumptions as in Theorem \ref{T:main}, let $u_{\vep_n}$ be the unique weak solution on $[0,T]$ to \eqref{eq:1.1}--\eqref{eq:1.3} and let $u$ be the limit of $(u_{\vep_n})$ as in Theorem \ref{T:AO1} as $\vep_n\to 0_+$. In addition, let $z\in L^2(\Omega\times (0,T);L^2(0,1;H^1_{\mathrm{per}}(\square)/\R))$ be a function obtained in Theorem \ref{T:AO1}. Then there exist a {\rm (}not relabeled{\rm )} subsequence of $(\vep_n)$ and $w\in L^2_{\rm loc}(\Omega \times [0,T] ;L^2(0,1;H^1_{\mathrm{per}}(\square)/\R))$ such that
\begin{equation}\label{eq:w}
\nabla u_{\vep_n} \wtts \nabla u + \nabla_y w\ \text{ in }
[L^2(\omega \times (0,T) \times \square \times (0,1))]^N
\end{equation}
for any $\omega\Subset \Omega$, and moreover, it holds that
\begin{equation}\label{z-w}
z(x,t,y,s) =mu^{m-1}(x,t)w(x,t,y,s)
\end{equation}
for a.e.~$(x,t,y,s)\in \Omega\times (0,T)\times \square\times (0,1)$.
\end{lemma}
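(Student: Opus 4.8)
The plan is to obtain the local weak two-scale convergence \eqref{eq:w} as a direct consequence of the local uniform estimates in Lemma \ref{L:lue}, and then to identify the corrector $w$ with $z/(mu^{m-1})$ by testing against suitable oscillating functions and exploiting the chain rule $\nabla u_{\vep_n}^m = mu_{\vep_n}^{m-1}\nabla u_{\vep_n}$ together with the strong convergence $u_{\vep_n}\to u$ from \eqref{eq:Thm1.1-2}. First I would fix an exhaustion $\omega_k\Subset\Omega$ with $\bigcup_k\omega_k=\Omega$. On each $\omega_k\times(0,T)$, Lemma \ref{L:lue} gives that $(u_{\vep_n})$ is bounded in $L^2(0,T;H^1(\omega_k))$ and, by that same lemma, $u_{\vep_n}\to u$ in $L^2(\omega_k\times(0,T))$ with $\nabla u\in[L^2(\omega_k\times(0,T))]^N$. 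Hence Theorem \ref{T:wttscpt_grad} applies on the domain $\omega_k$ in place of $\Omega$ (the two-scale machinery of \S\ref{Ss:wtts} is insensitive to replacing $\Omega$ by a smaller open set), producing a subsequence and a function $w_k\in L^2(\omega_k\times(0,T);L^2(0,1;H^1_{\rm per}(\square)/\R))$ with $\nabla u_{\vep_n}\wtts\nabla u+\nabla_y w_k$ on $\omega_k$. A diagonal argument over $k$, combined with the fact that the two-scale limit on $\omega_k$ restricts to the one on $\omega_j$ for $j<k$ (uniqueness of two-scale limits), yields a single subsequence and a single $w\in L^2_{\rm loc}(\Omega\times[0,T];L^2(0,1;H^1_{\rm per}(\square)/\R))$ satisfying \eqref{eq:w} for every $\omega\Subset\Omega$.

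For the identity \eqref{z-w}, the idea is to test the two known two-scale convergences of $\nabla u_{\vep_n}$ and $\nabla u_{\vep_n}^m$ against the same oscillating test function and compare. Fix $\phi\in C^\infty_{\rm c}(\Omega)$, $b\in C^\infty_{\rm per}(\square)$, $\psi\in C^\infty_{\rm c}(0,T)$, $c\in C^\infty_{\rm per}([0,1])$, and choose a cutoff $\rho$ with $\rho\equiv1$ on a neighborhood of $\mathrm{supp}\,\phi$ so that all integrals localize inside some $\omega\Subset\Omega$. Writing $\nabla u_{\vep_n}^m=mu_{\vep_n}^{m-1}\nabla u_{\vep_n}$, I would insert $mu^{m-1}(x,t)$ next to the test function: since $u_{\vep_n}\to u$ strongly in $L^\rho(0,T;L^{m+1}(\Omega))$ for all $\rho<\infty$, one has $mu_{\vep_n}^{m-1}\to mu^{m-1}$ strongly in a suitable Lebesgue space, and on the other hand $\nabla u_{\vep_n}$ is bounded in $L^2(\omega\times(0,T))$; a product-of-weak-and-strong argument (with the oscillating factors treated by the mean-value property, \cite[Theorem 2.6]{CD}) shows that the two-scale test of $\nabla u_{\vep_n}^m$ against $\phi\,b(\tfrac{x}{\vep_n})\,\psi\,c(\tfrac{t}{\vep_n^r})$ equals, in the limit, the two-scale test of $\nabla u_{\vep_n}$ against $mu^{m-1}\phi\,b(\tfrac{x}{\vep_n})\,\psi\,c(\tfrac{t}{\vep_n^r})$. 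Using \eqref{eq:Thm1.1-3} on the left and \eqref{eq:w} on the right, this becomes
\begin{align*}
\vierint \bigl[\nabla u^m+\nabla_y z\bigr]\phi\, b(y)\,\psi(t)\,c(s)\,\d Z
= \vierint mu^{m-1}\bigl[\nabla u+\nabla_y w\bigr]\phi\, b(y)\,\psi(t)\,c(s)\,\d Z.
\end{align*}
Since $\nabla u^m=mu^{m-1}\nabla u$ a.e.\ (valid on $[u\ne0]$; on $[u=0]$ both sides of \eqref{z-w} are understood to vanish, using that $\nabla u^m=0$ a.e.\ there), the mean-zero-in-$y$ parts must agree, and the arbitrariness of $\phi,b,\psi,c$ forces $\nabla_y z=mu^{m-1}\nabla_y w$ a.e.; since $z$ and $w$ both have zero mean in $y$, this upgrades to $z=mu^{m-1}w$ a.e., which is \eqref{z-w}.

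The main obstacle I anticipate is the rigorous passage $\int \nabla u_{\vep_n}^m(\cdots) = \int mu^{m-1}\nabla u_{\vep_n}(\cdots)+o(1)$, i.e.\ replacing the factor $u_{\vep_n}^{m-1}$ that appears inside the flux by its limit $u^{m-1}$. One cannot simply multiply a weakly convergent sequence ($\nabla u_{\vep_n}$, only locally bounded in $L^2$) by a strongly convergent one ($u_{\vep_n}^{m-1}$) unless the exponents match up in Hölder's inequality; the delicate point is that $\nabla u_{\vep_n}$ is controlled only in $L^2$ locally, while $u_{\vep_n}^{m-1}$ is controlled via $u_{\vep_n}\in L^\infty(0,T;L^{m+1}(\Omega))$ and $u_{\vep_n}^m\in L^2(0,T;H^1_0(\Omega))$, so one must interpolate carefully — e.g.\ exploit that $u_{\vep_n}^{m-1}-u^{m-1}\to0$ in $L^{(m+1)/(m-1)}$ uniformly in $t$ and that, by Sobolev embedding, $u_{\vep_n}^{m-1}=(u_{\vep_n}^m)^{(m-1)/m}$ enjoys extra integrability — to close the Hölder estimate on the error term. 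Also requiring a little care is the behavior near $[u=0]$: one shows $\nabla u^m=0$ a.e.\ on $[u^m=0]$ (a standard fact for Sobolev functions) so that the comparison above is meaningful there and \eqref{z-w} holds in the stated almost-everywhere sense. The remaining steps — the diagonal extraction, uniqueness of two-scale limits on nested domains, and the mean-value lemma for the oscillating coefficients — are routine given \S\ref{Ss:wtts} and \S\ref{S:lue}.
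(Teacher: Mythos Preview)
Your proposal is correct and follows essentially the same route as the paper: local two-scale compactness from Lemma~\ref{L:lue} plus a diagonal extraction for \eqref{eq:w}, then the chain rule $\nabla u_{\vep_n}^m=mu_{\vep_n}^{m-1}\nabla u_{\vep_n}$ combined with a weak-times-strong product to obtain \eqref{z-w}. The paper resolves the H\"older-matching obstacle you flag by proving outright that $u_{\vep_n}^{m-1}\to u^{m-1}$ strongly in $L^2(\Omega\times(0,T))$---immediate when $m\le 3$ since $(m+1)/(m-1)\ge 2$, and for $m>3$ by interpolating the $L^{(m+1)/(m-1)}$-convergence against the uniform bound $\|u_{\vep_n}^{m-1}\|_{L^{2m/(m-1)}}\le C$ coming from $u_{\vep_n}^m$ bounded in $L^2$---after which the pairing with the weak-$L^2(\omega\times(0,T))$ limit of $\nabla u_{\vep_n}\cdot B(\tfrac{x}{\vep_n})c(\tfrac{t}{\vep_n^r})$ goes through directly.
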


\begin{proof}
By Theorem \ref{T:wttscpt_grad} and Lemma \ref{L:lue} along with \eqref{eq:Thm1.1-2} and \eqref{hyp-u0}, for each $\omega \Subset \Omega$, one can take a (not relabeled) subsequence of $(\vep_n)$ and a function $w_\omega$, which may depend on the choice of $\omega$, such that
$$
\nabla u_{\vep_n} \wtts \nabla u + \nabla_y w_\omega \ \mbox{ in } [L^2(\omega \times (0,T) \times \square \times (0,1))]^N.
$$
Here we recall that the weak two-scale limit $\nabla u$ coincides with the distributional gradient of $u$, and hence, it is independent of the choice of $\omega$ (see Lemma \ref{L:lue}). Taking an increasing sequence of domains $\omega_m := \{x \in \Omega \colon \mathrm{dist}(x,\partial \Omega) > 1/m\} \Subset \Omega$, by a diagonal argument, we can construct a (not relabeled) subsequence of $(\vep_n)$ and a Lebesgue measurable function $w : \Omega \times (0,T) \times \square \times (0,1) \to \R$ such that \eqref{eq:w} holds true. Indeed, we find that $w_{\omega_m}(\cdot,t,y,s) = w_{\omega_{m'}}(\cdot,t,y,s)$ in $\omega_m$ if $m \leq m'$. Hence for each $x \in \Omega_0 := \cup_{m\in \N} \omega_m$, we define $w(x,t,y,s) = w_{\omega_m}(x,t,y,s)$, where $m$ is an integer such that $x \in \omega_m$. Since $|\Omega \setminus \Omega_0| = 0$, we obtain \eqref{eq:w}. Moreover, for any $\omega \Subset \Omega$, one can take $m \in \N$ such that $\omega \subset \omega_m$, and therefore, we conclude that $w \in L^2_{\rm loc}(\Omega \times [0,T]; L^2(0,1;H^1_{\rm per}(\square) / \R))$.

We next prove \eqref{z-w}. Furthermore, recalling \eqref{eq:Thm1.1-2} in Theorem \ref{T:AO1}, that is,
\begin{equation}\label{cs}
u_{\vep_n}^m\to u^m\quad \text{ strongly in }\ L^{\rho}(0,T;L^{(m+1)/m}(\Omega))
\end{equation}
for any $\rho\in (1,+\infty)$, we can verify that
\begin{align*}
\lefteqn{
\left\|u_{\vep_n}^{m-1}-u^{m-1}\right\|_{L^{(m+1)/(m-1)}(\Omega\times (0,T))}^{(m+1)/(m-1)}
}\\
 &\le \int_0^T\int_{\Omega}\left|u_{\vep_n}^m(x,t)-u^m(x,t)\right|^{(m+1)/m}\, \d x\d t \to 0\quad\text{ as } \ \vep_n\to 0_+.
\end{align*}
Hence in case $m \leq 3$, it follows immediately that
\begin{equation}\label{eq:L2s}
u_{\vep_n}^{m-1}\to u^{m-1}\quad \text{ strongly in }\ L^{2}(\Omega\times (0,T)).
\end{equation}
In case $m > 3$, recall that $(u_\vep^m)$ is bounded in $L^2(0,T;H^1_0(\Omega))$
(see Lemma \ref{L:bdd}). Since $1 < (m+1)/(m-1) < 2 < 2m/(m-1)$, there exists $0<\theta<1$ such that
\begin{align*}
\lefteqn{
\left\|u_{\vep_n}^{m-1}-u^{m-1}\right\|_{L^{2}(\Omega\times (0,T))}
}
\\
&\leq \left\|u_{\vep_n}^{m-1}-u^{m-1}\right\|_{L^{(m+1)/(m-1)}(\Omega\times (0,T))}^{1-\theta}
\left\|u_{\vep_n}^{m-1}-u^{m-1}\right\|_{L^{2m/(m-1)}(\Omega\times (0,T))}^{\theta}\\
&\leq C\left\|u_{\vep_n}^{m-1}-u^{m-1}\right\|_{L^{(m+1)/(m-1)}(\Omega\times (0,T))}^{1-\theta}\to 0\quad \text{ as } \ \vep_n\to 0_+.
\end{align*}
Here we have used the fact (by Poincar\'e's inequality) that
\begin{align*}
\left\|u_{\vep_n}^{m-1}-u^{m-1}\right\|_{L^{2m/(m-1)}(\Omega\times (0,T))}^{2m/(m-1)}
\leq \left\|u_{\vep_n}^m-u^m\right\|_{L^2(\Omega\times (0,T))}^2 \leq C. 
\end{align*}
Hence \eqref{eq:L2s} holds for $m>3$ as well. Moreover, one can verify from \eqref{eq:w} that
\begin{equation}\label{BBB}
\nabla u_{\vep_n} \cdot B(\tfrac{x}{\vep_n}) c(\tfrac{t}{\vep_n^r}) \to \int^1_0 \int_{\square} \left( \nabla u + \nabla_y w \right) \cdot B(y) c(s) \, \d y \d s
\end{equation}
weakly in $[L^2(\omega \times (0,T))]^N$ for any $\omega \Subset \Omega$, $B \in [C^\infty_{\rm per}(\square)]^N$ and $c \in C^\infty_{\rm per}([0,1])$. For all $\phi\in C^{\infty}_{\rm c}(\Omega)$ and $\psi\in C^{\infty}_{\rm c}(0,T)$ along with $B$ and $c$ as above, one can derive from \eqref{eq:L2s} and \eqref{BBB} that
\begin{align*}
\lefteqn{\int_0^T\int_{\Omega}\nabla u_{\vep_n}^m(x,t) \cdot \phi(x)B(\tfrac{x}{\vep_n})\psi(t)c(\tfrac{t}{\vep_n^r}) \, \d x\d t}
\\
&= \int_0^T\int_{\Omega} mu_{\vep_n}^{m-1}(x,t)\nabla u_{ \vep_n  }(x,t)\cdot  \phi(x)B(\tfrac{x}{\vep_n})\psi(t)c(\tfrac{t}{\vep_n^r})\, \d x\d t\nonumber\\
&= \vierint mu^{m-1}(x,t) \left[ \nabla u(x,t)+\nabla_y w(x,t,y,s) \right]\\
&\qquad \cdot \, \phi(x)B(y)\psi(t) c(s)\, \d Z,
\end{align*}
since the support of $\phi$ is compact in $\Omega$, i.e., $\omega := \mathrm{supp} \, \phi \Subset \Omega$. Moreover, recalling \eqref{eq:Thm1.1-3}, we conclude that
\begin{align*}
\vierint \nabla_y \left[ mu^{m-1}(x,t)w(x,t,y,s)-z(x,t,y,s)\right]\\
\cdot\, \phi(x) B(y)\psi(t)c(s)\, \d Z =0,
\end{align*}
which together with the arbitrariness of test functions
and the zero mean in $\square$ of $z$ and $w$ yield the assertion.
\end{proof}

We shall prove Theorem \ref{T:main} by applying Lemma \ref{L:key} for all $\phi\in C^{\infty}_{\rm c}(\Omega)$, $b\in C^{\infty}_{\mathrm{per}}(\square)/\R$, $\psi \in C^{\infty}_{\rm c}(0,T)$ and $c\in C^{\infty}_{\mathrm{per}}([0,1])$.

\begin{proof}[Proof of Theorem \ref{T:main}]
Thanks to Lemma \ref{L:lue} along with \eqref{cs}, by \eqref{eq:w} of Lemma \ref{L:zw} and Corollary \ref{C:vw}, we obtain
\begin{align}
\lim_{\vep_n\to 0_+} &\int_{0}^{T}\int_{\Omega}\frac{u_{\vep_n}(x,t)-u(x,t)}{\vep_n} \phi(x)b(\tfrac{x}{\vep_n})\psi(t)\partial_sc(\tfrac{t}{\vep_n^{r}})\, \d x\d t\label{sharp}\\
&= \vierint w(x,t,y,s) \phi(x)b(y)\psi(t)\partial_sc(s)\, \d Z\nonumber
\end{align}
for any $\phi\in C^{\infty}_{\rm c}(\Omega)$, $b\in C^{\infty}_{\mathrm{per}}(\square)/\R$ {\rm (}i.e., $\langle b \rangle_y = 0${\rm )}, $\psi\in C^{\infty}_{\rm c}(0,T)$ and $c\in C^{\infty}_{\mathrm{per}}([0,1])$, since the support of $\phi$ is compact in $\Omega$. 

For the case where $0<r<2$, Lemma \ref{L:key} along with \eqref{sharp} yields that
\begin{equation}\label{eq:cpz}
\int_0^1\int_\square a(y,s)\bigl[ \nabla u^m(x,t)+\nabla_y z(x,t,y,s)\bigl]\cdot \nabla_yb(y)c(s)\, \d y\d s=0
\end{equation}
for a.e.~$(x,t) \in \Omega\times (0,T)$; here we have used the arbitrariness of $\phi\in C^{\infty}_{\rm c}(\Omega)$ and $\psi\in C^{\infty}_{\rm c}(0,T)$.

Now, one can write
\begin{equation}\label{eq:zform}
z(x,t,y,s)=\sum_{k=1}^N \partial_{x_k}u^m(x,t) \Phi_k(y,s),
\end{equation}
where $\Phi_k = \Phi_k(y,s)$ is the unique weak solution to \eqref{eq:CP1}. Indeed, setting $\tilde{z}(x,t,y,s) = $ (the right-hand side of \eqref{eq:zform}), we see that \eqref{eq:cpz} with $z$ replaced by $\tilde{z}$ holds. Hence, as in~\cite{AO1}, putting $bc=(z-\tilde{z})(x,t,\cdot,\cdot)$ (via suitable approximation) and subtracting \eqref{eq:cpz} for $z$ and $\tilde{z}$, one can derive from \eqref{eq:CP1} and the Poincar\'e-Wirtinger inequality that
\begin{align}
0
&\stackrel{\eqref{eq:CP1}}=
\int_0^1\int_{\square}a(y,s)\nabla_y(z-\tilde{z})(x,t,y,s)\cdot \nabla_y(z-\tilde{z})(x,t,y,s)\, \d y\d s \label{eq:chkz}\\
&\ge
{\lambda}\|\nabla_y(z-\tilde{z})(x,t)\|^2_{L^2(\square\times (0,1))}
\ge
C_{\lambda,\square}\|(z-\tilde{z})(x,t)\|^2_{L^2(\square\times (0,1))},
\nonumber
\end{align}
where $C_{\lambda,\square} > 0$ is a constant from the Poincar\'e-Wirtinger inequality. Hence \eqref{eq:zform} follows. Thus we conclude that
\begin{align*}
j_{\rm hom}(x,t)
&=
\int_0^1\int_{\square}a(y,s)\Bigl[\nabla u^m(x,t)+\sum_{k=1}^N \partial_{x_k}u^m(x,t) \nabla_y\Phi_k(y,s)\Bigl]\, \d y\d s\\
&=
\sum_{k=1}^N\underbrace{\Bigl(\int_0^1\int_{\square}a(y,s)\left[\nabla_y\Phi_k(y,s)+e_k\right]\, \d y\d s\Bigl)}_{= \,a_{\rm hom}e_k \text{ by \eqref{eq:ahom}}}\partial_{x_k}u^m(x,t)\\
&=a_{\rm hom}\nabla u^m(x,t).
\end{align*}

For the case where $r=2$, employing \eqref{sharp} again and \eqref{z-w} of Lemma \ref{L:zw}, we find from Lemma \ref{L:key} that
\begin{align*}
0&= - \vierint w(x,t,y,s) \phi(x)b(y)\psi(t)\partial_s c(s)\, \d Z
\\
&+ \vierint a(y,s) \left[ \nabla u^m(x,t)+mu^{m-1}(x,t)\nabla_y w(x,t,y,s)\right]\\
&\qquad \cdot \phi(x) \nabla_y b(y) \psi(t) c(s)\, \d Z
\end{align*}
for any $\phi\in C^{\infty}_{\rm c}(\Omega)$, $b\in C^{\infty}_{\mathrm{per}}(\square)/\R$, $\psi\in C^{\infty}_{\rm c}(0,T)$ and $c\in C^{\infty}_{\mathrm{per}}([0,1])$. Here we recall the fact that $w$ is differentiable at $s\in (0,1)$ in the Sobolev-Bochner sense (see the proof of Theorem 1.9 for $r=2$ in~\cite{AO1}). Hence from the arbitrariness of $\phi\in C^{\infty}_{\rm c}(\Omega)$ and $\psi\in C^{\infty}_{\rm c}(0,T)$, we obtain
\begin{align*}
0&= \int^1_0 \left\langle \partial_s w(x,t,\cdot,s), b \right\rangle_{H^1_{\rm per}(\square) / \R} c(s) \, \d s
\\
&+ \int^1_0 \int_\square mu^{m-1}(x,t) a(y,s) \left[ \nabla u(x,t) + \nabla_y w(x,t,y,s)\right]\cdot \nabla_y b(y) c(s)\, \d y\d s
\end{align*}
for a.e.~$(x,t) \in \Omega \times (0,T)$. When $u(x,t)=0$, we can obtain $w(x,t,\cdot,\cdot) \equiv 0$ (and hence, $z(x,t,\cdot,\cdot) \equiv 0$). So we shall assume $u(x,t) \neq 0$ below. As in \eqref{eq:chkz}, by virtue of the $1$-periodicity of $w$ in $(y,s)$, one can verify that
\begin{equation*}
w(x,t,y,s)=\sum_{k=1}^N \left[\partial_{x_k}u^m(x,t)\right]\Psi_k(x,t,y,s),
\end{equation*}
where $\Psi_k = \Psi_k(x,t,y,s) \in L^\infty(\Omega \times (0,T) ; L^2(0,1;H^1_{\rm per}(\square)/\R))$ is the unique weak solution of \eqref{eq:CP2} (see~\cite[Proof of Theorem 1.4 and Appendix]{AO1}), and moreover, it follows from \eqref{z-w} that
\begin{equation*}
z(x,t,y,s)=\sum_{k=1}^N \left[\partial_{x_k}u^m(x,t)\right] \Phi_k(x,t,y,s)
\end{equation*}
with $\Phi_k(x,t,y,s) := mu^{m-1}(x,t) \Psi_k(x,t,y,s)$. The rest of the proof runs as in the case of $0<r<2$.

For the case where $2<r<+\infty$, note that $z=z(x,t,y)$ is independent of $s$. Indeed, let $\phi\in C^{\infty}_{\rm c}(\Omega)$, $b\in C^{\infty}_{\mathrm{per}}(\square)/\R$, $\psi\in C^{\infty}_{\rm c}(0,T)$ and $c\in C^{\infty}_{\mathrm{per}}([0,1])$. Setting $\Psi_{\vep_n}(x,t) = \vep_n^{r-1} \phi(x)b(\frac{x}{\vep_n})\psi(t)c(\tfrac{t}{\vep_n^r})$ in the proof of Lemma \ref{L:key}, we can derive
\begin{align*}
0&=
\lim_{\vep_n\to 0_+}\int_0^T\int_{\Omega}
\frac{u_{\vep_n}(x,t)-u(x,t)}{\vep_n}\phi(x)b(\tfrac{x}{\vep_n})\psi(t)\partial_s c(\tfrac{t}{\vep_n^r})\, \d x\d t\\
&=
\int_0^T\int_{\square}\int_{\Omega}\left[\int_0^1w(x,t,y,s)\partial_sc(s)\, \d s\right]
\phi(x)b(y)\psi(t)\, \d x\d y\d t.
\end{align*}
Here we used \eqref{sharp} again to derive the second equality. Hence from the arbitrariness of test functions and the zero mean of $w$ in $y$, we derive that
$$
\partial_s w(x,t,y, \cdot)=0 \quad \text{for a.e.~$(x,t,y) \in \Omega \times (0,T) \times \square$}
$$
in the distributional sense. Thus $w(x,t,y,s)$ is independent of $s\in (0,1)$, and so is $z(x,t,y,s)$ by Lemma \ref{L:zw}.

Now, setting $c(s)\equiv 1$ in Lemma \ref{L:key}, we see from the arbitrariness of $\phi\in C^\infty_{\rm c}(\Omega)$ and $\psi\in C^\infty_{\rm c}(0,T)$ that
\begin{equation*}
 \int_{\square}\Big(\int_0^1a(y,s)\, \d s\Big) \left[\nabla u^m(x,t)+\nabla_y z(x,t,y)\right]\cdot\nabla_yb(y)\, \d y=0
\end{equation*}
for a.e.~$(x,t) \in \Omega\times (0,T)$. Furthermore, recalling $z=z(x,t,y)$, as in \eqref{eq:chkz}, one can prove \eqref{eq:zform}, where $\Phi_k$ is the unique weak solution to \eqref{eq:CP3}. Hence the rest of the proof runs as in the case of $0<r<2$. This completes the proof.
\end{proof}

\end{document}